\newtheorem{thm}{Theorem}[section]
\newtheorem{lem}[thm]{Lemma}
\newtheorem{cor}[thm]{Corollary}
\theoremstyle{definition}
\newtheorem{defn}[thm]{Definition}
\newtheorem{exa}[thm]{Example}
\newtheorem*{note}{Note}
\journal{xxx}
\begin{document}

\begin{frontmatter}
\title{Uniform positive recursion frequency of any minimal dynamical system on a compact space}

\author{Xiongping Dai}
\ead{xpdai@nju.edu.cn}
\address{Department of Mathematics, Nanjing University, Nanjing 210093, People's Republic of China}

\begin{abstract}
Using Gottschalk's notion\,---\,weakly locally almost periodic point, we show in this paper that if $f\colon X\rightarrow X$ is a minimal continuous transformation of a compact Hausdorff space $X$ to itself, then for all entourage $\varepsilon$ of $X$,
\begin{equation*}
\inf_{x\in X}\left\{\liminf_{N-M\to\infty}\frac{1}{N-M}\sum_{n=M}^{N-1}1_{\varepsilon[x]}(f^nx)\right\}>0.
\end{equation*}
An analogous assertion also holds for each minimal $C^0$-semiflow $\pi\colon \mathbb{R}_+\times X\rightarrow X$ and for any minimal transformation group with discrete amenable phase group.
\end{abstract}

\begin{keyword}
Minimality; weakly locally almost periodic point; semiflow.

\medskip
\MSC[2010] 37B05 $\cdot$ 20M20
\end{keyword}
\end{frontmatter}

\section{Introduction}\label{sec1}
Let $f\colon X\rightarrow X$ be any continuous transformation of a compact $T_2$-space $X$. We say that $(f,X)$ is a \textit{minimal} cascade~\cite{NS, Fur, Aus} if the orbit $\{f^nx\,|\,n=0,1,2,\dotsc\}$
is dense in $X$ for all $x\in X$.

Since here $X$ is compact $T_2$, there is a unique compatible symmetric uniform structure $\mathscr{U}_X$ on $X$ (cf.~\cite[Appendix~II]{Aus}). For any $\varepsilon\in\mathscr{U}_X$ write $\varepsilon[x]=\{y\in X\,|\,(x,y)\in\varepsilon\}$ which is an open neighborhood of $x$. Given $\varepsilon\in\mathscr{U}_X$, we define
$$
N_f(x,\varepsilon[y])=\left\{n\in\mathbb{Z}_+\,|\,f^nx\in\varepsilon[y]\right\}
$$
for all $x,y\in X$. Let $|\centerdot|$ be the counting measure on $\mathbb{Z}_+$ and then we can define the ``Banach lower density'' as follows:
$$
\textmd{BD}_*(f,x,\varepsilon)=\liminf_{N-M\to\infty}\frac{\big{|}[M,N)\cap N_f(x,\varepsilon[x])\big{|}}{N-M}\quad \forall x\in X,\ \varepsilon\in\mathscr{U}_X.
$$
Given any $\varepsilon\in\mathscr{U}_X$ we shall consider in this short paper whether $\textmd{BD}_*(f,x,\varepsilon)>0$ uniformly for $x\in X$ or not.

If $(f,X)$ is minimal and if $\mu$ is an ergodic Borel probability measure of $(f,X)$, then by the classical Birkhoff ergodic theorem~\cite{NS} it follows that for all $y\in X$ and $\varepsilon\in\mathscr{U}_X$,
\begin{equation*}
\lim_{N\to\infty}\frac{1}{N}\big{|}[0,N)\cap N_f(x,\varepsilon[y])\big{|}=\mu(\varepsilon[y])>0\quad \textit{for }\mu\textit{-a.e. }x\in X.
\end{equation*}
However, this ``$\mu$-a.e.'' does not imply the desired conclusion that $\textmd{BD}_*(f,x,\varepsilon)>0$ uniformly for $x\in X$.

In this short paper, by using Gottschalk's notion\,---\,``weakly locally almost periodic'' point of transformation semigroup (cf.~Definition~\ref{def3} in $\S\ref{sec2}$), we will be able to show that if $(f,X)$ is a minimal cascade, then for all $\varepsilon\in\mathscr{U}_X$ there follows $\textmd{BD}_*(f,x,\varepsilon)>0$ uniformly for $x\in X$;
see Theorem~\ref{thm15} proved in $\S\ref{sec3}$. Moreover, this assertion also holds for all minimal continuous-time $C^0$-semiflow; see Theorem~\ref{thm17} below. Finally in $\S\ref{sec4}$, we will consider this question for any minimal flow with discrete amenable phase group; see Theorem~\ref{thm4.4}.

\section{Weakly locally almost periodic points of any semiflow}\label{sec2}
Let $X$ be a compact $T_2$-space and by $\mathfrak{N}_x$ we will mean the filter of neighborhoods of $x$ in $X$. Let $T$ be a multiplicative topological semigroup with an identity $e$ and by $\mathscr{K}_T$ it means the collection of non-empty compact subsets of $T$. When $T$ is discrete, each $K\in\mathscr{K}_T$ is finite.

As usual, a \textit{semiflow} or \textit{transformation semigroup} is understood as a pair $(T,X)$ where $T$ is called the phase semigroup and $X$ the phase space such that $(t,x)\mapsto tx$ of $T\times X$ to $X$ is jointly continuous with $ex=x$ and $t(sx)=(ts)x$ for all $x\in X$ and $s,t\in T$.

In $\S\ref{sec3}$ we will be mainly interested to the classical cases of $T=(\mathbb{Z}_+,+)$ and $T=(\mathbb{R}_+,+)$ with the usual topologies, and in $\S\ref{sec4}$ we will consider transformation groups with discrete amenable phase groups. 

First of all we need to introduce in this section some basic notions and results for our later discussion.

\begin{defn}[{cf.~\cite{Fur,AD}}]\label{def1}
\begin{enumerate}
\item A subset $A$ of $T$ is called \textit{syndetic} in $T$ if there exists a $K\in\mathscr{K}_T$ such that $Kt\cap A\not=\emptyset$ for all $t\in T$.

\item An $x\in X$ is referred to as an \textit{almost periodic} (a.p.) \textit{point} of $(T,X)$ if
$$
N_T(x,U)=\{t\in T\,|\,tx\in U\}
$$
is syndetic in $T$. Equivalently, $x\in X$ is an a.p. point if and only if $\overline{Tx}$ is a minimal set of $(T,X)$.
\end{enumerate}
\end{defn}

Since $T$ does not need to be abelian here, `$Kt\cap A\not=\emptyset$' is not permitted to be replaced by `$tK\cap A\not=\emptyset$' in Definition~\ref{def1}.

\begin{lem}\label{lem2}
If $A$ is a syndetic subset of $\mathbb{Z}_+$ with $K=\{0,1,\dotsc,L\}$ such that $(K+t)\cap A\not=\emptyset$ for all $t\in\mathbb{Z}_+$, then $A$ has positive lower density $\liminf_{N\to\infty}\frac{1}{N}|[0,N)\cap A|\ge\frac{1}{L+1}$.
\end{lem}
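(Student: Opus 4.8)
The plan is to translate the syndeticity hypothesis into a statement about bounded gaps and then count by partitioning $[0,N)$ into consecutive blocks of length $L+1$. First I would observe that, since $K=\{0,1,\dots,L\}$, the condition $(K+t)\cap A\neq\emptyset$ for all $t\in\mathbb{Z}_+$ says precisely that the block $\{t,t+1,\dots,t+L\}$ of $L+1$ consecutive nonnegative integers meets $A$, for every $t\geq 0$. In other words, every window of $L+1$ consecutive elements of $\mathbb{Z}_+$ contains at least one point of $A$.

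Next I would fix $N\in\mathbb{N}$ and write $q=\lfloor N/(L+1)\rfloor$, so that the $q$ disjoint blocks
\[
B_j=\big[j(L+1),\,(j+1)(L+1)\big),\qquad j=0,1,\dots,q-1,
\]
are pairwise disjoint and all contained in $[0,N)$. Applying the previous observation to $t=j(L+1)$ shows that each $B_j$ contains at least one element of $A$; since the $B_j$ are disjoint this yields $|[0,N)\cap A|\geq q=\lfloor N/(L+1)\rfloor$.

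Finally I would divide by $N$ and let $N\to\infty$: from $\frac{1}{N}|[0,N)\cap A|\geq \frac{\lfloor N/(L+1)\rfloor}{N}$ and $\lim_{N\to\infty}\frac{\lfloor N/(L+1)\rfloor}{N}=\frac{1}{L+1}$ we conclude $\liminf_{N\to\infty}\frac{1}{N}|[0,N)\cap A|\geq\frac{1}{L+1}$, as desired.

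There is no serious obstacle here; the only point requiring a little care is the bookkeeping with the floor function — namely that the possibly incomplete last block $[q(L+1),N)$ simply contributes a nonnegative quantity and may be discarded, and that the $q$ full blocks genuinely fit inside $[0,N)$ — after which the limit is immediate.
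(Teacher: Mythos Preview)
Your proof is correct. The paper states this lemma without proof, evidently regarding it as elementary; your block-counting argument is the natural way to fill in the omitted details and matches the estimate $\tfrac{1}{K+1}$ used later in the proof of Theorem~\ref{thm15}.
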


\begin{defn}[{Gottschalk~\cite{G56} for $T$ in groups}]\label{def3}
Let $(T,X)$ be any semiflow and let $x\in X$. Then:
\begin{enumerate}
\item[(1)] $x$ is called a \textit{weakly locally almost periodic} (w.l.a.p.) point of $(T,X)$ if given $U\in\mathfrak{N}_x$ there exist a $V\in\mathfrak{N}_x$, a syndetic subset $A$ of $T$ and a $K\in\mathscr{K}_T$ such that for all $x^\prime\in V$ there is a subset $B$ of $T$ such that
\begin{gather*}
Kt\cap B\not=\emptyset\ \forall t\in A\quad \textrm{and}\quad Bx^\prime\subseteq U.
\end{gather*}
$(T,X)$ is called \textit{w.l.a.p.} if it is weakly locally almost periodic at every point $x$ of $X$.

\item[(2)] If here $B$ is just independent of $x^\prime\in V$ in (1), then $x$ is called a \textit{locally almost periodic} (l.a.p.) point of $(T,X)$; that is, for each $U\in\mathfrak{N}_x$ there exist a $V\in\mathfrak{N}_x$ and a syndetic set $B$ in $T$ such that $BV\subseteq U$.
\end{enumerate}
\end{defn}

Clearly, any l.a.p. point is a w.l.a.p. point; but the w.l.a.p. is actually weaker than the l.a.p., see Theorem~\ref{thm2.6} and Example~\ref{exa14} below.

There are two simple equivalent descriptions of w.l.a.p. point of any semiflow as follows, which are very useful for our main results.

\begin{lem}\label{lem4}
Let $(T,X)$ be any semiflow and let $x\in X$. Then the following three conditions are pairwise equivalent.
\begin{enumerate}
\item[$(1)$] $x$ is a w.l.a.p. point of $(T,X)$.

\item[$(2)$] Given $U\in\mathfrak{N}_x$ there exist a $V\in\mathfrak{N}_x$ and a $K\in\mathscr{K}_T$ such that $TV\subseteq K^{-1}U$.

\item[$(3)$] For each $U\in\mathfrak{N}_x$, there exist a $V\in\mathfrak{N}_x$ and a $K\in\mathscr{K}_T$ such that if $x^\prime\in V$ there is a subset $B$ of $T$ with $Kt\cap B\not=\emptyset\ \forall t\in T$ and $Bx^\prime\subseteq U$.
\end{enumerate}
Here $K^{-1}U=\bigcup_{k\in K}k^{-1}U$.
\end{lem}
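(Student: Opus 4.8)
The plan is to prove the cycle of implications $(3)\Rightarrow(1)\Rightarrow(2)\Rightarrow(3)$, which yields the pairwise equivalence. The step $(3)\Rightarrow(1)$ is immediate: the set $A=T$ is syndetic in $T$ (take $K=\{e\}$), so condition $(3)$ is precisely condition $(1)$ with the syndetic witness chosen to be $T$ itself, and the same $V$ and $K$ work.

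For $(1)\Rightarrow(2)$, fix $U\in\mathfrak{N}_x$ and let $V\in\mathfrak{N}_x$, the syndetic set $A\subseteq T$, and $K\in\mathscr{K}_T$ be as furnished by $(1)$. Since $A$ is syndetic, choose $K^\prime\in\mathscr{K}_T$ with $K^\prime t\cap A\not=\emptyset$ for every $t\in T$. I claim $TV\subseteq(KK^\prime)^{-1}U$, noting that $KK^\prime\in\mathscr{K}_T$ because the multiplication $T\times T\to T$ is continuous. Indeed, given $x^\prime\in V$ and $t\in T$, condition $(1)$ supplies a set $B\subseteq T$ with $Bx^\prime\subseteq U$ and $Ks\cap B\not=\emptyset$ for all $s\in A$; pick $k^\prime\in K^\prime$ with $k^\prime t\in A$, then $k\in K$ with $kk^\prime t\in B$, so that $(kk^\prime)(tx^\prime)=(kk^\prime t)x^\prime\in Bx^\prime\subseteq U$, i.e.\ $tx^\prime\in(kk^\prime)^{-1}U\subseteq(KK^\prime)^{-1}U$. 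Since $x^\prime\in V$ and $t\in T$ were arbitrary, this gives $(2)$ with the compact set $KK^\prime$.

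For $(2)\Rightarrow(3)$, fix $U\in\mathfrak{N}_x$ and take $V\in\mathfrak{N}_x$ and $K\in\mathscr{K}_T$ with $TV\subseteq K^{-1}U$. Given $x^\prime\in V$, put $B=N_T(x^\prime,U)=\{t\in T\,|\,tx^\prime\in U\}$; then $Bx^\prime\subseteq U$ by construction. For any $t\in T$ we have $tx^\prime\in TV\subseteq K^{-1}U$, so there is $k\in K$ with $(kt)x^\prime=k(tx^\prime)\in U$, whence $kt\in Kt\cap B$; thus $Kt\cap B\not=\emptyset$ for every $t\in T$, which is exactly $(3)$.

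The only delicate point is $(1)\Rightarrow(2)$: one must absorb both the syndeticity witness $A$ and the $x^\prime$-dependent sets $B$ into a single compact set working uniformly over $x^\prime\in V$ and $t\in T$. The two facts that make this go through are that the syndeticity constant $K^\prime$ may be chosen independently of $x^\prime$, and that a product of two compact subsets of a topological semigroup is again compact (finite when $T$ is discrete). The remaining implications are straightforward unwindings of the definitions.
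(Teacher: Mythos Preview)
Your proof is correct and follows essentially the same route as the paper's: the same cycle of implications, the same product-of-compacta trick $KK'$ (the paper's $CF$) for $(1)\Rightarrow(2)$, and the same ``$T$ is syndetic'' observation for $(3)\Rightarrow(1)$. Your choice $B=N_T(x',U)$ in $(2)\Rightarrow(3)$ is a slight streamlining of the paper's $B=\bigcup_{k\in K}kT_k$, but the underlying idea is identical.
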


\begin{proof}
$(1)\Rightarrow(2)$. Let $x$ be a w.l.a.p. point of $(T,X)$ and $U\in\mathfrak{N}_x$. Then there exist a $V\in\mathfrak{N}_x$, a syndetic subset $A$ of $T$, and a $C\in\mathscr{K}_T$ such that $y\in V$ implies that there is a subset $B$ of $T$ with $A\subseteq C^{-1}B$ and $By\subseteq U$. Take an $F\in\mathscr{K}_T$ with $T=F^{-1}A$ for $A$ is syndetic in $T$, and set $K=CF$ that is independent of $y\in V$. Hence $T=F^{-1}C^{-1}B=K^{-1}B$ and $Ty\subseteq K^{-1}U$ so that $TV\subseteq K^{-1}U$.

$(2)\Rightarrow(3)$. Given any $x^\prime\in V$, define a set $T_k=\{t\in T\,|\,tx^\prime\in k^{-1}U\}$ for each $k\in K$. Then $T=\bigcup_{k\in K}T_k$ and define $B=\bigcup_{k\in K}kT_k$. Then $Bx^\prime\subseteq U$ and $Kt\cap B\not=\emptyset$ for all $t\in T$. This thus concludes condition (3).

$(3)\Rightarrow(1)$. This is evident for $T$ itself is syndetic in $T$.

Therefore the proof of Lemma~\ref{lem4} is completed.
\end{proof}

It is known that when $T$ is in groups, l.a.p. is independent of the topology of $T$ (cf., e.g.,~\cite{MW, AM}). In fact, w.l.a.p. also has this property in groups as follows:

\begin{cor}
Let $(T,X)$ be a flow with phase group $T$ which is not discrete and $x\in X$. Then $x$ is a w.l.a.p. point of $(T,X)$ if and only if $x$ is a w.l.a.p. point of $(T,X)$ with $T$ to be discrete.
\end{cor}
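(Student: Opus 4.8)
The plan is to reduce the statement, in both directions, to the topology-free reformulation of w.l.a.p. supplied by Lemma~\ref{lem4}(2): a point $x$ is a w.l.a.p. point of $(T,X)$ precisely when, for every $U\in\mathfrak{N}_x$, there are $V\in\mathfrak{N}_x$ and $K\in\mathscr{K}_T$ with $TV\subseteq K^{-1}U$. Since $\mathscr{K}_T$ for the discrete topology on $T$ is exactly the family of finite subsets of $T$, and since $(T,X)$ equipped with the discrete topology on $T$ is again a flow (joint continuity there is just separate continuity of each translation $z\mapsto tz$, which already holds), the whole matter comes down to comparing the clause ``there is a compact $K$'' with the clause ``there is a finite $F$''. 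One implication is immediate: a finite subset of the topological group $T$ is compact, so if $x$ is a w.l.a.p. point in the discrete sense, witnessed by $V$ and a finite $F$ with $TV\subseteq F^{-1}U$, then the same $V$ together with $K:=F\in\mathscr{K}_T$ witnesses w.l.a.p. in the topological sense by Lemma~\ref{lem4}(2).

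For the converse, fix $U\in\mathfrak{N}_x$. Because $X$ is compact $T_2$, hence regular, choose an open $W$ with $x\in W\subseteq\overline W\subseteq U^{\circ}$, and then a symmetric entourage $\varepsilon\in\mathscr{U}_X$ with $\varepsilon[\overline W]\subseteq U^{\circ}$; such $\varepsilon$ exists since $\overline W$ is compact and contained in the open set $U^{\circ}$. Applying w.l.a.p. of $x$ in the topological sense to $W$, Lemma~\ref{lem4}(2) gives $V\in\mathfrak{N}_x$ and a \emph{compact} $K\in\mathscr{K}_T$ with $TV\subseteq K^{-1}W=\bigcup_{k\in K}k^{-1}W$. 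Next restrict the action to $K\times X$: this is a continuous map on a compact $T_2$ space, hence uniformly continuous with respect to the (unique) uniformity of $K\times X$, which is the product of those on $K$ and $X$; so there are an entourage $\eta$ of $K$ and a $\delta\in\mathscr{U}_X$ such that $(k_1,k_2)\in\eta$ and $(z_1,z_2)\in\delta$ imply $(k_1z_1,k_2z_2)\in\varepsilon$, and in particular, taking $z_1=z_2$, $(k_1,k_2)\in\eta$ implies $(k_1z,k_2z)\in\varepsilon$ for every $z\in X$. By compactness pick $k_1,\dots,k_n\in K$ with $K\subseteq\bigcup_{i}\eta[k_i]$ and put $F=\{k_1,\dots,k_n\}$, a finite subset of $T$.

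It then remains to verify $TV\subseteq F^{-1}U$: given $z\in TV$ there is $k\in K$ with $kz\in W$, and there is $i$ with $(k_i,k)\in\eta$, whence $(k_iz,kz)\in\varepsilon$ and therefore $k_iz\in\varepsilon[kz]\subseteq\varepsilon[\overline W]\subseteq U$, i.e.\ $z\in k_i^{-1}U\subseteq F^{-1}U$. Thus $V$ and the finite set $F$ witness that $x$ is a w.l.a.p. point of $(T,X)$ with $T$ discrete, again by Lemma~\ref{lem4}(2), which completes the argument. I expect the only genuinely delicate point to be the passage from the compact set $K$ to the finite set $F$: it rests on uniform continuity of the action on $K\times X$ together with the ``thickening'' of $W$ to $U$ through a single entourage, which is exactly where compactness of $X$ (and of $K$) is essential; everything else is bookkeeping around Lemma~\ref{lem4}(2).
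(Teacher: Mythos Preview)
Your proof is correct and follows the same overall strategy as the paper: the easy direction is immediate (finite sets are compact), and for the converse you shrink $U$ to a smaller neighborhood, apply Lemma~\ref{lem4}(2) to get a compact $K$, and then use compactness of $K$ together with continuity of the action to replace $K$ by a finite $F$, absorbing the error in the slack between the shrunken neighborhood and $U$. The only difference is in how the finitization step is packaged. The paper uses the group structure of $T$ directly: it finds a neighborhood $L$ of $e$ in $T$ with $(Lk)^{-1}U'\subseteq k^{-1}U$ for every $k\in K$, and then covers $K$ by finitely many translates $Lk_i$. You instead invoke uniform continuity of the restricted action $K\times X\to X$ (both sides compact $T_2$) to obtain an entourage $\eta$ on $K$ with $(k,k')\in\eta\Rightarrow(kz,k'z)\in\varepsilon$ for all $z\in X$, and cover $K$ by finitely many $\eta$-balls. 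Both devices yield the same conclusion with the same amount of work; your version has the mild bonus that it nowhere uses that $T$ is a group (beyond what is needed for the statement itself), so it would transfer unchanged to semiflows.
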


\begin{proof}
Since any finite subset of $T$ must be a compact subset of $T$, hence the sufficiency holds trivially. Conversely, let $x$ is w.l.a.p. for $(T,X)$; then we shall show that $x$ is a w.l.a.p. for $(T,X)$ with $T$ discrete. In fact, given any open $U\in\mathfrak{N}_x$ let $U^\prime$ be a closed neighborhood of $x$ with $U^\prime\subset U$. By (2) of Lemma~\ref{lem4} there exist a $V\in\mathfrak{N}_x$ and a compact subset $K$ of $T$ such that $TV\subseteq K^{-1}U^\prime$. Since $T\times X\rightarrow X$ is jointly continuous and $T$ is a topological group, there is a neighborhood $L$ of $e$ in $T$ such that $(Lk)^{-1}U^\prime\subseteq k^{-1}U$ for all $k\in K$. Then by the compactness of $K$, we can pick a finite set $F\subset K$ such that $K^{-1}U^\prime\subseteq F^{-1}U$ so that $TV\subseteq F^{-1}U$. By Lemma~\ref{lem4} again, $x$ is an l.w.a.p. point of $(T,X)$ with $T$ discrete.
\end{proof}

By (1) of Definition~\ref{def3}, any w.l.a.p. point is an a.p. point. In fact, an a.p. point must be w.l.a.p. restricted to its orbit closure as shown by Theorem~\ref{thm2.6} below. Theorem~\ref{thm2.6} is the important tool for our main results of this paper, which meanwhile shows that the w.l.a.p. is essentially weaker than the l.a.p. property.

\begin{thm}\label{thm2.6}
Let $(T,X)$ be a minimal semiflow; then each point of $X$ is w.l.a.p. for $(T,X)$.
\end{thm}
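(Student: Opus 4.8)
The plan is to use the compactness of $X$ together with the minimality-driven fact that every point is almost periodic, and to package this uniformly via the characterization in Lemma~\ref{lem4}(2): I must show that for each $x\in X$ and each open $U\in\mathfrak{N}_x$ there exist $V\in\mathfrak{N}_x$ and $K\in\mathscr{K}_T$ with $TV\subseteq K^{-1}U$. Equivalently, writing $W=\bigcap_{k\in K}k^{-1}U$... no, the useful direction is: I want every forward orbit of every $x'\in V$ to meet $k^{-1}U$ for some $k\in K$, i.e. $Tx'\cap(kx')^{-1}$... more precisely I want, for the compact set $KU := \bigcup_{k\in K}kU$ (or rather $K^{-1}U$ as defined), that $TV\subseteq K^{-1}U$.

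First I would reduce to a covering problem on $X$. Since $(T,X)$ is minimal, for every $z\in X$ the orbit $Tz$ is dense, so there is some $t_z\in T$ with $t_zz\in U$, and by joint continuity there is an open neighborhood $O_z$ of $z$ with $t_zO_z\subseteq U$. The family $\{O_z\}_{z\in X}$ is an open cover of the compact space $X$, so finitely many $O_{z_1},\dots,O_{z_m}$ cover $X$; set $K=\{t_{z_1},\dots,t_{z_m}\}\in\mathscr{K}_T$. Then for every $z\in X$ we have $z\in O_{z_i}$ for some $i$, hence $t_{z_i}z\in U$, i.e. $z\in t_{z_i}^{-1}U\subseteq K^{-1}U$. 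Thus $X=K^{-1}U$. Now the catch: I need $TV\subseteq K^{-1}U$ for a \emph{neighborhood} $V$ of the given point $x$, not merely $X\subseteq K^{-1}U$; but since we have just shown $X=K^{-1}U$, trivially $TV\subseteq X=K^{-1}U$ for \emph{any} $V$ — in particular $V=X$ works. By Lemma~\ref{lem4}, $(1)\Leftrightarrow(2)$, this exhibits $x$ as a w.l.a.p. point of $(T,X)$, and since $x$ was arbitrary, $(T,X)$ is w.l.a.p.

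I do not expect a serious obstacle here: the only point to watch is that $K^{-1}U=\bigcup_{k\in K}k^{-1}U$ is an open set (it is a union of open sets, each $k^{-1}U$ being open by continuity of $z\mapsto kz$), so the equality $X=K^{-1}U$ is genuine and the inclusion $TV\subseteq K^{-1}U$ is legitimate in the sense required by Lemma~\ref{lem4}(2). The mild subtlety — and the reason this argument does not collapse to triviality in the non-minimal case — is precisely that minimality gives $t_z z\in U$ for \emph{every} $z$, which is what lets the single compact set $K$ work globally; without minimality one would only get a cover of $\overline{Tx}$ and would need to restrict the flow. I would state the proof in exactly this order: (i) fix $x$ and open $U\ni x$; (ii) for each $z$ produce $t_z$ and $O_z$ by density of $Tz$ and joint continuity; (iii) extract a finite subcover and define $K$; (iv) conclude $X=K^{-1}U$, hence condition (2) of Lemma~\ref{lem4} holds with, say, $V=X$; (v) invoke Lemma~\ref{lem4} to finish.
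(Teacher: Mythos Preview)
Your proof is correct and follows essentially the same approach as the paper: use minimality and compactness to produce a finite $K\subseteq T$ with $X=K^{-1}U$, then invoke Lemma~\ref{lem4}(2). The only difference is cosmetic---the paper skips your intermediate neighborhoods $O_z$ and simply observes that $\{t^{-1}U:t\in T\}$ is itself an open cover of $X$, since each $t^{-1}U$ is already open.
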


\begin{proof}
Let $x\in X$ and $U\in\mathfrak{N}_x$ an open neighborhood of $x$. Since $\{t^{-1}U\,|\,t\in T\}$ is an open cover of $X$ and $X$ is compact, there is a finite set $K\subseteq T$ such that $K^{-1}U=X$. Then $TV\subseteq K^{-1}U$ for all $V\in\mathfrak{N}_x$. Thus $x$ is a w.l.a.p. point of $(T,X)$ by (2) of Lemma~\ref{lem4}.
\end{proof}

Since a minimal flow/semiflow is in general not l.a.p. (cf.~\cite{AM}), hence the w.l.a.p. is weaker than the l.a.p. dynamics by Theorem~\ref{thm2.6}.

\begin{defn}\label{def6}
\begin{enumerate}
\item An $x\in X$ is called an \textit{accessible} point of $(T,X)$, or we say $(T,X)$ is \textit{accessible at $x$}, if there exist points $y,z\in X, y\not=z$ and a net $\{t_n\}$ in $T$ such that $t_n(y,z)\to(x,x)$.
\item $(T,X)$ is called a \textit{distal} semiflow if given $x,y\in X$ with $x\not=y$, there is an $\alpha\in\mathscr{U}_X$ such that $(tx,ty)\not\in\alpha$.
\end{enumerate}
\end{defn}

Clearly $(T,X)$ is distal if and only if there exist no accessible points of $(T,X)$. If a minimal semiflow has an accessible point, then each point is accessible.

\begin{defn}
\begin{enumerate}
\item We say $(T,X)$ is \textit{equicontinuous at a point} $x\in X$ in case given $\varepsilon\in\mathscr{U}_X$ there is a $\delta\in\mathscr{U}_X$ such that if $y\in\delta[x]$ then $(tx,ty)\in\varepsilon$ for all $t\in X$.
\item When $(T,X)$ is equicontinuous at every point of $X$, $(T,X)$ is called \textit{equicontinuous}.
\end{enumerate}
Then by Lebesque's covering lemma, $(T,X)$ is equicontinuous if and only if given $\varepsilon\in\mathscr{U}_X$ there is a $\delta\in\mathscr{U}_X$ such that $T\delta\subseteq\varepsilon$ (cf.~\cite[Lemma~1.6]{AD}).
\end{defn}

The following lemma is an important tool, which is motivated by \cite[Lemma~1]{G56}.

\begin{lem}\label{lem8}
Let $(T,X)$ be a semiflow with $T$ a discrete semigroup. Suppose that
there exists $x\in X$ such that $(T,X\times X)$ is w.l.a.p. at $(x,x)$ but $(T,X)$ is not equicontinuous at $x$. Then $x$ is an accessible point of $(T,X)$ and hence $(T,X)$ is not distal.
\end{lem}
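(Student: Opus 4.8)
The plan is to argue by contradiction, extracting an accessible pair from the failure of equicontinuity and using the w.l.a.p. property at $(x,x)$ to push it into an arbitrarily small entourage. First I would suppose $(T,X)$ is not equicontinuous at $x$: there is a fixed $\varepsilon_0\in\mathscr{U}_X$ such that for every $\delta\in\mathscr{U}_X$ one can find $y_\delta\in\delta[x]$ and $t_\delta\in T$ with $(t_\delta x,t_\delta y_\delta)\notin\varepsilon_0$. Letting $\delta$ range over a base of $\mathscr{U}_X$ directed by reverse inclusion gives nets $y_\delta\to x$ and $t_\delta\in T$ with $(t_\delta x,t_\delta y_\delta)$ staying outside $\varepsilon_0$; passing to a subnet (using compactness of $X\times X$) we may assume $(t_\delta x,t_\delta y_\delta)\to(p,q)$ with $(p,q)\notin\mathrm{int}\,\varepsilon_0$, so in particular $p\neq q$ once $\varepsilon_0$ is chosen closed, or at least $(p,q)\notin\varepsilon_0'$ for some smaller closed entourage. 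The point of this step is to manufacture a ``spreading'' pair of orbits issuing from points near the diagonal at $x$.

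Next I would invoke the hypothesis that $(x,x)$ is w.l.a.p. for $(T,X\times X)$. Fix an arbitrary $\alpha\in\mathscr{U}_X$; by Lemma~\ref{lem4}(2) applied to the product system there are a neighborhood $W$ of $(x,x)$ in $X\times X$ and a finite $K\subseteq T$ with $T\cdot W\subseteq K^{-1}(\alpha\times\alpha)[(x,x)]$, i.e. for every $(a,b)\in W$ and every $t\in T$ there is $k\in K$ with $kt(a,b)\in(\alpha[x])\times(\alpha[x])$. Since $y_\delta\to x$, eventually $(x,y_\delta)\in W$; applying the above with $(a,b)=(x,y_\delta)$ and $t=t_\delta$ produces, for each $\delta$ large, some $k_\delta\in K$ with $k_\delta t_\delta x\in\alpha[x]$ and $k_\delta t_\delta y_\delta\in\alpha[x]$. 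Because $K$ is finite we may pass to a further subnet on which $k_\delta\equiv k$ is constant, and then $(k t_\delta x,\,k t_\delta y_\delta)\to k(p,q)$; hence $kp\in\bar\alpha[x]$ and $kq\in\bar\alpha[x]$, so $kp$ and $kq$ both lie in $\bar\alpha[x]$, whence $(kp,kq)\in\bar\alpha\circ\bar\alpha$.

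Now I would let $\alpha$ shrink through a base of $\mathscr{U}_X$. For each such $\alpha$ the previous paragraph yields $k^{(\alpha)}\in T$ (from a finite set depending on $\alpha$) with $k^{(\alpha)}(p,q)$ lying in $\bar\alpha\circ\bar\alpha$ composed into a neighborhood of the diagonal at $x$; more precisely $k^{(\alpha)}p$ and $k^{(\alpha)}q$ are both within $\bar\alpha$ of $x$. Taking a subnet of the $k^{(\alpha)}$ (no compactness of $T$ needed — we just need the net indexed by $\alpha$) we get a net $\{s_n\}$ in $T$ with $s_n p\to x$ and $s_n q\to x$, i.e. $s_n(p,q)\to(x,x)$ with $p\neq q$. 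That is exactly the definition of $x$ being an accessible point of $(T,X)$ (Definition~\ref{def6}(1)), and by the remark following Definition~\ref{def6} it follows that $(T,X)$ is not distal.

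The main obstacle is the bookkeeping that keeps $p\neq q$ genuine: one must choose at the outset a \emph{closed} entourage $\varepsilon_0$ witnessing non-equicontinuity (replacing the given one by a closed symmetric entourage inside it), so that the limit pair $(p,q)$ of the net $(t_\delta x,t_\delta y_\delta)$ satisfies $(p,q)\notin\varepsilon_0$ and in particular $p\neq q$ — and then verify that this separation is not destroyed when we later apply the maps $k$ and $s_n$; but since $s_n p\to x\leftarrow s_n q$ only forces the \emph{limits} to coincide, $p\neq q$ itself is untouched, so the accessibility conclusion is safe. A secondary technical point is the iterated use of ``pass to a subnet'' — first to get $(p,q)$, then to make $k_\delta$ constant for each fixed $\alpha$, then to diagonalize over $\alpha$ — which is routine but should be spelled out in the order above to avoid circularity.
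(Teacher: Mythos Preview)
Your argument is correct and follows essentially the same strategy as the paper's proof: use non-equicontinuity at $x$ to produce a pair $(p,q)$ off a fixed entourage that lies in the closure of $T$-images of arbitrarily small diagonal neighborhoods, then use the w.l.a.p.\ property at $(x,x)$ (via Lemma~\ref{lem4}(2)) together with finiteness of $K$ to push $(p,q)$ into every neighborhood of $(x,x)$. The paper packages this with filter bases $\mathcal{F}_+=\{T(N\times N)\cap\varepsilon^\prime\}$ and $\mathcal{F}_-=\{T^{-1}(N\times N)\cap\varepsilon^\prime\}$, showing $\bigcap\bar{\mathcal{F}}_+\subseteq\bigcap\mathcal{F}_-\neq\emptyset$, whereas you extract a specific limit pair $(p,q)$ via subnets and then build the accessing net $\{s_\alpha\}$ explicitly; these are two presentations of the same compactness argument.

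One small correction to your final paragraph: choosing $\varepsilon_0$ \emph{closed} does not by itself guarantee $(p,q)\notin\varepsilon_0$, since the complement of a closed set is open and need not contain limits. What you actually need (and what you already obtained in your first paragraph) is $(p,q)\notin\mathrm{int}\,\varepsilon_0$; since $\mathrm{int}\,\varepsilon_0$ is an entourage containing the diagonal, this alone forces $p\neq q$. Equivalently, you may replace $\varepsilon_0$ by an \emph{open} entourage so that its complement is closed. Either way the conclusion $p\neq q$ stands, so this is only a wording issue in your commentary, not a gap in the proof.
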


\begin{proof}
Since $(T,X)$ is not equicontinuous at $x$, there is an $\varepsilon\in\mathscr{U}_X$ with $T(N\times N)\not\subset\varepsilon$ and then $T(N\times N)\cap\varepsilon^\prime\not=\emptyset$ for every $N\in\mathfrak{N}_x$, where $\varepsilon^\prime$ is the complement of $\varepsilon$ in $X\times X$. Define two families of subsets of $X\times X$ as follows:
\begin{gather*}
\mathcal{F}_+=\{T(N\times N)\cap\varepsilon^\prime\,|\,N\in\mathfrak{N}_x\}\quad\textrm{and}\quad \mathcal{F}_-=\{T^{-1}(N\times N)\cap(\varepsilon^\prime\cap X\times X)\,|\,N\in\mathfrak{N}_x\}.
\end{gather*}
Next we will show that $\bigcap\mathcal{F}_-\not=\emptyset$.

Indeed, since $T(N\times N)\cap\varepsilon^\prime\not=\emptyset$ for all $N\in\mathfrak{N}_x$ and $\bar{\mathcal{F}}_+=\{\textrm{cls}_{X\times X}F\,|\,F\in\mathcal{F}_+\}$ is a closed filter-base in the compact set $\varepsilon^\prime$, then $\bigcap\bar{\mathcal{F}}_+\not=\emptyset$. Let $N\in\mathfrak{N}_x$ be any \textit{closed} neighborhood of $x$. Since $(T,X\times X)$ is w.l.a.p. at $(x,x)$ by hypothesis, there exists an $M\in\mathfrak{N}_x$ (so that $V=M\times M\in\mathfrak{N}_{(x,x)}$ in $X\times X$) and a finite subset $K$ of $T$ such that $T(M\times M)\subset K^{-1}(N\times N)$ by (2) of Lemma~\ref{lem4} for $U=N\times N$ whence $\textrm{cls}_{X\times X}{T(M\times M)}\subset X\times X\cap K^{-1}(N\times N)\subseteq X\times X\cap T^{-1}(N\times N)$ and $\textrm{cls}_{X\times X}({T(M\times M)\cap\varepsilon^\prime})\subseteq T^{-1}(N\times N)\cap(\varepsilon^\prime\cap X\times X)$. Therefore, $\bigcap\mathcal{F}_-\supseteq\bigcap\bar{\mathcal{F}}_+\not=\emptyset$.

Now let $(w,z)\in\bigcap\mathcal{F}_-$. Then $(w,z)\not\in\varepsilon$ (so $w\not=z$) and $(w,z)\in X\times X\cap T^{-1}(N\times N)$ for all $N\in\mathfrak{N}_x$. This implies that for each $\delta\in\mathscr{U}_X$ there exists $t_\delta\in T$ such that $(t_\delta w,t_\delta z)\in\delta[x]\times\delta[x]$ and thus $(w,z)$ is a proximal pair approaching to $x$ for $(T,X)$.

Therefore we have concluded that $x$ is an accessible point of $(T,X)$.
\end{proof}

\begin{defn}[\cite{DX,AD}]
$(T,X)$ is \textit{uniformly almost periodic} (u.a.p.) if and only if given $\varepsilon\in\mathscr{U}_X$ there is a syndetic set $A$ in $T$ such that $Ax\subseteq\varepsilon[x]$ for all $x\in X$.
\end{defn}

Recall that a semiflow $(T,X)$ is said to be \textit{surjective} if each $t\in T$ is a surjective self-map of $X$. Every minimal semiflow with amenable phase semigroup must be surjective (cf.~\cite{AD}).

Let $(T,X)$ be any semiflow with phase semigroup $T$ in the following three lemmas.

\begin{lem}[\cite{DX, AD}]\label{lem10}
$(T,X)$ is u.a.p. if and only if it is equicontinuous surjective.
\end{lem}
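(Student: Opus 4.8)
The plan is to prove the two implications separately: the enveloping semigroup does the work for ``equicontinuous $+$ surjective $\Rightarrow$ u.a.p.'', while the converse combines a distality argument with Lemma~\ref{lem8}.

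For ``$\Leftarrow$'' I would form $E=\overline{\{t|_X:t\in T\}}$ in $X^X$. Equicontinuity says exactly that $\{t|_X:t\in T\}$ is a uniformly equicontinuous family, so pointwise and uniform convergence coincide on it, $E$ is compact (Ascoli), every member of $E$ is continuous, composition $E\times E\to E$ is jointly continuous, and $E$ is a subsemigroup with identity $\mathrm{id}_X$. Since on an equicontinuous family a pointwise limit is a uniform limit, and a uniform limit of surjective continuous self-maps of a compact Hausdorff space is surjective, every element of $E$ is surjective; hence the only idempotent of $E$ is $\mathrm{id}_X$ (a surjective idempotent self-map is the identity), and the standard argument---$\mathrm{id}_X$ lies in the closure of $\{\varphi^n:n\ge1\}$ for each $\varphi\in E$---shows $E$ is a group; joint continuity of composition then forces inversion to have closed graph, so $E$ is a compact Hausdorff topological group. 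Given $\varepsilon\in\mathscr U_X$ (symmetric, closed, with $\gamma\circ\gamma\subseteq\varepsilon$ for some symmetric $\gamma\in\mathscr U_X$), the set $V=\{\varphi\in E:(\varphi x,x)\in\gamma\ \forall x\in X\}$ is a neighbourhood of $\mathrm{id}_X$ in $E$; I would cover the compact group $E$ by finitely many left translates $\psi_1(\mathrm{int}\,V),\dots,\psi_m(\mathrm{int}\,V)$ and, using density of $\{t|_X:t\in T\}$ in $E$, pick $s_j\in T$ with $s_j|_X\circ\psi_j\in\mathrm{int}\,V$. A short computation with $\gamma\circ\gamma\subseteq\varepsilon$ shows that $A=\{t\in T:(tx,x)\in\varepsilon\ \forall x\in X\}$ meets $\{s_1,\dots,s_m\}t$ for every $t\in T$, so $A$ is syndetic; and plainly $Ax\subseteq\varepsilon[x]$ for all $x$. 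Thus $(T,X)$ is u.a.p.

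For ``$\Rightarrow$'' assume $(T,X)$ is u.a.p. I would first show it is distal: if $y_0\neq z_0$ and a net $\{t_n\}$ satisfied $t_ny_0,t_nz_0\to x_*$, I would fix a closed symmetric $\varepsilon$ with $(y_0,z_0)\notin\varepsilon\circ\varepsilon$, take the syndetic $A$ and its compact witness $K$ from u.a.p. at $\varepsilon$, choose $k_n\in K$ with $k_nt_n\in A$, pass to a subnet with $k_n\to k\in K$, and use joint continuity of the action to get $(kx_*,y_0)\in\varepsilon$ and $(kx_*,z_0)\in\varepsilon$, hence $(y_0,z_0)\in\varepsilon\circ\varepsilon$---a contradiction. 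Distality gives, via the usual Ellis-semigroup fact (the idempotent of a minimal left ideal of $E$ is $\mathrm{id}_X$, so $E$ is a group), that each $t|_X$ is invertible in $E$, hence surjective; since distality also makes each $t$ injective, every $t\in T$ is a homeomorphism of $X$. For equicontinuity I would check that u.a.p. makes $(T,X\times X)$ w.l.a.p. at every diagonal point $(x,x)$: given a basic neighbourhood $W\times W$ of $(x,x)$, pick a closed symmetric $\varepsilon$ with $(\varepsilon\circ\varepsilon)[x]\subseteq W$, take $A,K$ from u.a.p. at $\varepsilon$, and verify $T\big(\varepsilon[x]\times\varepsilon[x]\big)\subseteq K^{-1}(W\times W)$, which is condition~(2) of Lemma~\ref{lem4} for the semiflow $(T,X\times X)$ at $(x,x)$. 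Then Lemma~\ref{lem8}, read contrapositively, applies: a distal $(T,X)$ which is w.l.a.p. on $X\times X$ at every diagonal point has no accessible point, so it must be equicontinuous at each $x$, i.e. equicontinuous. Together with surjectivity this finishes the proof.

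I expect the main obstacles to be the following. On the ``$\Leftarrow$'' side, establishing that $E$ is a \emph{topological} group (the group property uses surjectivity and the idempotent argument, the continuity of inversion the closed-graph/compactness observation) and then the passage from a neighbourhood of the identity to a syndetic set. On the ``$\Rightarrow$'' side, the Ellis-semigroup input yielding surjectivity, and the applicability of Lemma~\ref{lem8} for a general, possibly non-discrete, phase semigroup---for which one either passes to $T$ with the discrete topology (equicontinuity, distality and w.l.a.p. are insensitive to it, and a compact syndeticity witness replaces a finite one) or reruns the proof of Lemma~\ref{lem8} with a compact $K$, using that $\bigcup_{k\in K}k^{-1}N$ is closed whenever $N$ is closed and $K$ is compact.
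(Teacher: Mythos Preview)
The paper does not prove Lemma~\ref{lem10}; it is imported from \cite{DX,AD} without argument, so there is no in-paper proof to compare your proposal against.

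That said, your outline is a correct self-contained argument. The enveloping-semigroup route for ``equicontinuous surjective $\Rightarrow$ u.a.p.'' is standard and accurately sketched: surjectivity is precisely what forces every idempotent of $E$ to be $\mathrm{id}_X$, whence $E$ is a compact topological group, and a finite cover of $E$ by translates of an identity-neighbourhood produces the required syndetic set. For the converse, your chain ``u.a.p.\ $\Rightarrow$ distal $\Rightarrow$ $E(X)$ is a group $\Rightarrow$ each $t$ surjective'' is correct, and invoking Lemma~\ref{lem8} (after verifying the diagonal w.l.a.p.\ condition) to obtain equicontinuity is legitimate and not circular, since the proof of Lemma~\ref{lem8} does not rely on Lemma~\ref{lem10}. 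The one genuine loose end is the one you flag yourself: Lemma~\ref{lem8} is stated only for discrete $T$, whereas Lemma~\ref{lem10} is asserted for general phase semigroups. Both of your proposed fixes are sound---discretising $T$ (u.a.p., equicontinuity, surjectivity and distality are all insensitive to the topology on $T$, in line with the Note after Theorem~\ref{thm13}), or rerunning the proof of Lemma~\ref{lem8} with a compact $K$, using that $\bigcup_{k\in K}k^{-1}N$ is closed whenever $N$ is closed and $K$ compact---but this reduction should be carried out in the body of the argument rather than left as an anticipated obstacle.
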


\begin{lem}[\cite{AD}]\label{lem11}
If $(T,X)$ is equicontinuous surjective, then it is distal.
\end{lem}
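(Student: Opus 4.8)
The plan is to pass to the enveloping (Ellis) semigroup and use surjectivity to force it to be a group of homeomorphisms, from which distality is immediate. Write $E$ for the closure in $X^X$ (with the product topology) of the set of maps $\{x\mapsto tx\,|\,t\in T\}$, made into a semigroup under composition. Since $(T,X)$ is equicontinuous, the family $\{x\mapsto tx\,|\,t\in T\}$ is an equicontinuous subset of $C(X,X)$; its closure $E$ is therefore again equicontinuous, so every member of $E$ is a continuous self-map of $X$, on $E$ the topology of pointwise convergence agrees with that of uniform convergence, $E$ is compact (Ascoli), composition $E\times E\to E$ is jointly continuous, and $E$ contains $\mathrm{id}_X$ (the image of $e\in T$). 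Moreover each $g\in E$ is a uniform limit of surjective self-maps of the compact space $X$, and a uniform limit of surjections of a compact Hausdorff space is surjective; hence every element of $E$ is a continuous surjective self-map of $X$.

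The next step is to show $E$ is a group. If $u\in E$ is idempotent, $u\circ u=u$; since $u$ is surjective, each $w\in X$ has the form $u(z)$, so $u(w)=u(u(z))=u(z)=w$, i.e. $u=\mathrm{id}_X$. Thus $\mathrm{id}_X$ is the only idempotent of $E$. Now invoke the structure theory of compact right-topological semigroups: $E$ has a minimal left ideal $L$, and $L$ contains an idempotent, necessarily $\mathrm{id}_X$; hence $E=E\,\mathrm{id}_X\subseteq L$, so $L=E$ and $E$ is itself a minimal left ideal. Then for every $g\in E$ the set $Eg$ is a nonempty left ideal contained in the minimal left ideal $E$, so $Eg=E$ and $g$ has a left inverse in $E$. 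A monoid in which every element has a left inverse is a group; therefore every $g\in E$ is a bijection, hence a homeomorphism of $X$.

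Finally I would read off distality. Let $x\ne y$ and suppose, for contradiction, that $(x,y)$ is a proximal pair: there is a net $(t_\alpha)$ in $T$ with $(t_\alpha x,t_\alpha y)\to(p,p)$ for some $p\in X$. Using compactness of $E$, pass to a subnet along which also $(x\mapsto t_\alpha x)\to g$ in $E$. Then $g(x)=\lim t_\alpha x=p=\lim t_\alpha y=g(y)$, and since $g$ is injective this forces $x=y$, a contradiction. Hence for all $x\ne y$ there is $\alpha\in\mathscr{U}_X$ with $(tx,ty)\notin\alpha$ for every $t\in T$; that is, $(T,X)$ is distal.

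The crux — and the only place surjectivity is used — is the implication ``equicontinuous surjective $\Rightarrow$ $E$ is a group'': without surjectivity an equicontinuous semiflow need not be distal (for instance $x\mapsto x/2$ on $[0,1]$). This step rests on two points that merit a little care: that on the equicontinuous semigroup $E$ pointwise and uniform convergence coincide, so that limits of surjections remain continuous and surjective; and that a compact right-topological semigroup whose sole idempotent is the identity must be a group. Everything else is routine compactness together with the definitions of proximality and distality.
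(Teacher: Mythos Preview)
The paper does not supply a proof of this lemma; it is quoted from \cite{AD}. Your argument via the enveloping semigroup is correct and is the standard route: equicontinuity forces $E=\overline{\{t\cdot\}}\subseteq X^X$ to be a compact \emph{topological} (not merely right-topological) semigroup of continuous maps on which pointwise and uniform convergence agree; surjectivity of each $t$ passes to uniform limits, so every $g\in E$ is onto; an onto idempotent is the identity, whence by Ellis--Numakura $E$ is a group of homeomorphisms, and injectivity of the limit map kills any proximal pair. All steps are sound, including the small algebraic fact that a monoid in which every element has a left inverse is a group.

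One stylistic remark: once you know $E$ is a compact topological semigroup in which every element is surjective, you can bypass the minimal-left-ideal machinery entirely. Ellis--Numakura gives an idempotent $u\in E$ directly; surjectivity forces $u=\mathrm{id}_X$, and then for any $g\in E$ the closed subsemigroup $\overline{\{g^n:n\ge 1\}}$ again contains an idempotent, hence contains $\mathrm{id}_X$, so some net $g^{n_\alpha}\to\mathrm{id}_X$ and $g^{n_\alpha-1}\to g^{-1}\in E$. This yields the group structure with slightly less overhead, but your version is equally valid.
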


\begin{lem}[\cite{Dai}]\label{lem12}
$(T,X)$ is u.a.p. if and only if $(T,X)$ is l.a.p. and distal.
\end{lem}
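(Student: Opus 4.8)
The plan is to prove the two implications separately: that u.a.p.\ forces both l.a.p.\ and distality, and conversely that l.a.p.\ together with distality forces u.a.p. For the first I would use Lemmas~\ref{lem10} and~\ref{lem11} together with a direct verification; for the second, the idea is to lift l.a.p.\ to the product semiflow on the diagonal and then feed this into Lemma~\ref{lem8} and Lemma~\ref{lem10}.

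\emph{Necessity.} Assume $(T,X)$ is u.a.p. By Lemma~\ref{lem10} it is equicontinuous and surjective, so by Lemma~\ref{lem11} it is distal. It is also l.a.p.\ at every point, directly from the definition of u.a.p.: given $x\in X$ and $U=\varepsilon_0[x]\in\mathfrak{N}_x$, pick a symmetric $\varepsilon\in\mathscr{U}_X$ with $\varepsilon\circ\varepsilon\subseteq\varepsilon_0$, and use u.a.p.\ to obtain a syndetic $A\subseteq T$ with $Ay\subseteq\varepsilon[y]$ for every $y\in X$; setting $V=\varepsilon[x]$, for $y\in V$ and $a\in A$ one has $(x,y)\in\varepsilon$ and $(y,ay)\in\varepsilon$, hence $(x,ay)\in\varepsilon\circ\varepsilon\subseteq\varepsilon_0$, so $AV\subseteq U$ with $A$ syndetic. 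Thus $(T,X)$ is l.a.p.\ and distal.

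\emph{Sufficiency.} Assume $(T,X)$ is l.a.p.\ and distal. First I would note that the product semiflow $(T,X\times X)$ is l.a.p., a fortiori w.l.a.p., at every diagonal point $(x,x)$: a basic neighbourhood of $(x,x)$ contains some $U\times U$ with $U\in\mathfrak{N}_x$, and l.a.p.\ of $(T,X)$ at $x$ applied to $U$ yields $V\in\mathfrak{N}_x$ and a syndetic $B\subseteq T$ with $BV\subseteq U$, whence $V\times V\in\mathfrak{N}_{(x,x)}$ and $B(V\times V)\subseteq U\times U$. Since $(T,X)$ is distal it has no accessible point (the remark after Definition~\ref{def6}); therefore if $(T,X)$ failed to be equicontinuous at some $x$, then---$(T,X\times X)$ being w.l.a.p.\ at $(x,x)$ by the previous step---Lemma~\ref{lem8} would force $(T,X)$ to be non-distal, a contradiction. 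Hence $(T,X)$ is equicontinuous. Finally a distal semiflow is surjective (each orbit closure is a minimal set, and minimal semiflows are surjective; cf.~\cite{AD}), so Lemma~\ref{lem10} applies and $(T,X)$ is u.a.p.

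\emph{Main obstacle.} The substantive step is ``l.a.p.\ $+$ distal $\Rightarrow$ equicontinuous'', and it is engineered to drop out of Lemma~\ref{lem8} once the diagonal lift is in place, so the real work was done in that lemma. The only fresh bookkeeping for Lemma~\ref{lem12} is that Lemma~\ref{lem8} is stated for a discrete phase semigroup while here $T$ is an arbitrary topological one: its proof uses discreteness only cosmetically (finite sets in place of compact $K\in\mathscr{K}_T$), and one should check that the closed-filter-base argument and the resulting proximality witness are unaffected when $K$ is merely compact, or else pass to the discretization of $(T,X)$ and verify that w.l.a.p.\ at $(x,x)$ survives. Making surjectivity of distal semiflows precise in the generality needed is the remaining point to spell out rather than take on faith.
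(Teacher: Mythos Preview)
The paper does not prove Lemma~\ref{lem12}; it is quoted from~\cite{Dai} as a black box and then used in Theorem~\ref{thm13}. So there is no ``paper's own proof'' to compare against directly.

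That said, your argument is correct and is exactly the route the paper itself takes \emph{around} Lemma~\ref{lem12} in proving Theorem~\ref{thm13}: your sufficiency step is the chain $(3)\Rightarrow(4)\Rightarrow(2)\Rightarrow(1)$ there, namely lift l.a.p.\ to the diagonal (the paper's ``$(3)\Rightarrow(4)$ is obvious''), invoke Lemma~\ref{lem8} with distality to get equicontinuity, then use distal $\Rightarrow$ surjective and Lemma~\ref{lem10}. Your flagged obstacle---that Lemma~\ref{lem8} is stated for discrete $T$---is the right thing to notice, and your diagnosis is accurate: the only place discreteness enters is in taking $K$ finite, and the closed-filter-base step $\textrm{cls}\,T(M\times M)\subseteq K^{-1}(N\times N)$ survives for compact $K$ because $K^{-1}(N\times N)$ is the projection of the compact set $\{(k,y,z)\in K\times X\times X:(ky,kz)\in N\times N\}$ and hence closed. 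So no discretization is needed and nothing is missing.
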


\begin{thm}[{cf.~\cite[Theorem~1]{G56} for $T$ in groups}]\label{thm13}
Let $(T,X)$ be a semifflow with discrete phase semigroup $T$. Then the following statements are pairwise equivalent.
\begin{enumerate}
\item[$(1)$] $(T,X)$ is u.a.p.;
\item[$(2)$] $(T,X)$ is equicontinuous surjective;
\item[$(3)$] $(T,X)$ is l.a.p. distal;
\item[$(4)$] $(T, X\times X)$ is w.l.a.p. at every point of the diagonal $\varDelta$ of $X\times X$ and $(T,X)$ is distal.
\end{enumerate}
\begin{note}
In fact, $(1)\Leftrightarrow(2)\Leftrightarrow(3)$ is independent of the choice of the topology of $T$.
\end{note}
\end{thm}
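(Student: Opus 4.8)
The plan is to close the circle $(1)\Leftrightarrow(2)\Leftrightarrow(3)$ using only the already recorded Lemmas~\ref{lem10}, \ref{lem11} and \ref{lem12}, and then to splice condition $(4)$ into this circle by establishing $(1)\Rightarrow(4)$ and $(4)\Rightarrow(2)$, with Lemma~\ref{lem8} doing the real work in the latter. Indeed $(1)\Leftrightarrow(2)$ is exactly Lemma~\ref{lem10} and $(1)\Leftrightarrow(3)$ is exactly Lemma~\ref{lem12}, so $(1),(2),(3)$ are pairwise equivalent from the outset and all the content of the theorem lies in the relation of $(4)$ to the rest; the displayed Note will then be immediate, since these two equivalences invoke only lemmas whose statements make no mention of the topology of $T$.

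For $(1)\Rightarrow(4)$, assume $(T,X)$ is u.a.p. By Lemma~\ref{lem12} it is l.a.p.\ and distal. I would then observe that the l.a.p.\ property of $(T,X)$ forces the product semiflow $(T,X\times X)$ (with the diagonal action) to be l.a.p.\ — hence w.l.a.p.\ — at every point of $\varDelta$: a neighbourhood of $(x,x)$ in $X\times X$ contains a square $U_0\times U_0$ with $U_0\in\mathfrak{N}_x$, and l.a.p.\ at $x$ provides $V_0\in\mathfrak{N}_x$ and a syndetic $B\subseteq T$ with $BV_0\subseteq U_0$, whence $B(V_0\times V_0)\subseteq U_0\times U_0$ with $V_0\times V_0\in\mathfrak{N}_{(x,x)}$. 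Together with the distality of $(T,X)$ (also furnished by Lemma~\ref{lem12}), this establishes $(4)$.

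For $(4)\Rightarrow(2)$, assume $(4)$ and fix an arbitrary $x\in X$. As $T$ is discrete, Lemma~\ref{lem8} is available; its conclusion ``$(T,X)$ is not distal'' is false by hypothesis, so its hypothesis must fail, and since $(T,X\times X)$ is w.l.a.p.\ at $(x,x)$ this forces $(T,X)$ to be equicontinuous at $x$. As $x$ was arbitrary, $(T,X)$ is equicontinuous. On the other hand a distal semiflow on a compact $T_2$-space is surjective (each map $x\mapsto tx$, $t\in T$, is in fact a homeomorphism of $X$; cf.~\cite{AD}), so $(T,X)$ is equicontinuous surjective, which is $(2)$. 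Combined with $(1)\Leftrightarrow(2)\Leftrightarrow(3)$ this gives the pairwise equivalence of $(1)$--$(4)$.

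The main obstacle is the step $(4)\Rightarrow(2)$, with two delicate points. First, Lemma~\ref{lem8} must be used contrapositively and fed the full strength of $(4)$ — w.l.a.p.\ of $(T,X\times X)$ at \emph{every} diagonal point together with distality of $(T,X)$ — in order to upgrade ``equicontinuous at each point'' to ``equicontinuous''. Second, equicontinuity by itself does not imply surjectivity (for instance $\mathbb{Z}_+$ acting on $[0,1]$ by $x\mapsto x/2$ is equicontinuous but not surjective), so the distality hypothesis is genuinely needed to recover surjectivity; this is precisely the point that does not arise in Gottschalk's group version \cite[Theorem~1]{G56}, where every transition is automatically bijective.
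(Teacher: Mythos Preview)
Your proof is correct and follows essentially the same route as the paper: the equivalence of $(1),(2),(3)$ via Lemmas~\ref{lem10} and \ref{lem12}, the passage to $(4)$ via the observation that l.a.p.\ at $x$ yields l.a.p.\ of the product at $(x,x)$ (which the paper simply calls ``obvious'' as $(3)\Rightarrow(4)$), and the return $(4)\Rightarrow(2)$ by the contrapositive of Lemma~\ref{lem8} together with ``distal $\Rightarrow$ surjective''. Your write-up is more explicit about the square-neighbourhood argument and about why surjectivity needs the distality hypothesis, but the logical skeleton is identical.
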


\begin{proof}
First of all, $(1)\Leftrightarrow(2)\Leftrightarrow(3)$ follows from Lemmas~\ref{lem10}, \ref{lem11} and \ref{lem12}; and $(3)\Rightarrow(4)$ is obvious.
Now let $(4)$ hold. Then Lemma~\ref{lem8} follows that $(T,X)$ is equicontinuous. Since $(T,X)$ is distal, so it is surjective and thus (2) holds.
The proof of Theorem~\ref{thm13} is therefore completed.
\end{proof}

The l.a.p. implies the diagonalwise product semiflow is w.l.a.p. at the diagonal. However, the following Example~\ref{exa14} shows that the w.l.a.p. does not imply this.

\begin{exa}\label{exa14}
Let $\alpha>0$ be an irrational number sufficiently small and let $\langle \gamma_n\rangle_{n=1}^\infty$ be a sequence of rational numbers with $\gamma_n\to 1$ as $n\to\infty$. Let $X$ be the compact subset of the plane defined by
\begin{gather*}
X=\textrm{cls}_{\mathbb{R}\times \mathbb{R}}{\left\{(r,\theta)\,|\,\theta\in\mathbb{R},\ r=(1+\alpha+{\gamma_n}n^{-1})\pi,\ n=1,2,\dotsc\right\}}
\end{gather*}
where $(r,\theta)$ are polar coordinates of the plane.
We consider the homeomorphism $\tau$ of $X$ onto itself given by $(r,\theta)\mapsto\tau(r,\theta)=(r,\theta+r)$ and define $T=\{\tau^n\,|\,n\in\mathbb{Z}_+\}$. Clearly \begin{itemize}
\item \textit{$T$ acts distally on $X$; i.e., $(T,X)$ is distal.}
\end{itemize}
To see that $T$ is not equicontinuous acting on $X$, let $r_0=(1+\alpha)\pi$ and $r_n=(1+\alpha+\gamma_nn^{-1})\pi$ for each $n\ge1$; then $(r_n,\theta)\to(r_0,\theta)$ as $n\to\infty$. But
\begin{gather*}
\tau^{2n}(r_0,\theta)=(r_0,\theta+2n(1+\alpha)\pi)\quad\textrm{and}\quad\tau^{2n}(r_{2n},\theta)=(r_{2n},\theta+2n(1+\alpha)\pi+\gamma_{2n}\pi)
\end{gather*}
conclude that $T$ is not equicontinuous at any point $(r_0,\theta)$. Thus \textit{$T$ is not equicontinuous on $X$}. Further by Theorem~\ref{thm13}, it follows that \begin{itemize}
\item \textit{$(T,X)$ is not an l.a.p. semiflow.}
\end{itemize}
However,
\begin{itemize}
\item \textit{$(T,X)$ is a w.l.a.p. semiflow.}
\end{itemize}

Indeed, it is easy to see that $(T,X)$ is l.a.p. at every point $(r,\theta)\in X$ with $r\not=r_0$. Now given $(r_0,\theta)\in X$ and $\varepsilon>0$, since $1+\alpha$ is irrational, we can find a finite subset $K$ of $T$ such that
\begin{gather*}
K^{-1}(\{(r_0,\varphi)\,|\,\theta-\varepsilon<\varphi<\theta+\varepsilon\})=\{(r_0,\varphi)\,|\,0\le\varphi\le2\pi\}.
\end{gather*}
Since $r_n\to r_0$ as $n\to\infty$, we can obtain
\begin{gather*}
K^{-1}(\{(r_n,\varphi)\,|\,\theta-\varepsilon<\varphi<\theta+\varepsilon\})=\{(r_n,\varphi)\,|\,0\le\varphi\le2\pi\}.
\end{gather*}
as $n$ sufficiently large. Then by Lemma~\ref{lem4}, it follows that $T$ is w.l.a.p. at the point $(r_0,\theta)$. This proves the conclusion.
\end{exa}

\section{Uniform recursion frequency of minimal semiflows}\label{sec3}
Let $X$ be a compact $T_2$-space with the uniform structure $\mathscr{U}_X$. Given $\varepsilon\in\mathscr{U}_X$, $\frac{\varepsilon}{3}$ will stand for an entourage in $\mathscr{U}_X$ such that $\frac{\varepsilon}{3}\circ\frac{\varepsilon}{3}\circ\frac{\varepsilon}{3}\subseteq\varepsilon$.

In this section we will be concerned with the classical semiflows $(T,X)$ with phase semigroup $\mathbb{Z}_+$ or $\mathbb{R}_+$.
If $f\colon X\rightarrow X$ is a continuous transformation of $X$, then $(f,X)$ is called a cascade. It induces a semiflow $\mathbb{Z}_+\times X\rightarrow X$ by $(t,x)\mapsto f^tx$.

\begin{thm}\label{thm15}
Let $(f,X)$ be a minimal cascade; then
$\inf_{x\in X}\mathrm{BD}_*(f,x,\varepsilon)>0$
for all $\varepsilon\in\mathscr{U}_X$.
\end{thm}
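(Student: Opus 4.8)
The plan is to exploit Theorem~\ref{thm2.6}, which guarantees that every point of the minimal cascade $(f,X)$ is w.l.a.p., together with the reformulation in Lemma~\ref{lem4}(3) and the density estimate in Lemma~\ref{lem2}. Fix $\varepsilon\in\mathscr{U}_X$. First I would pass to a symmetric subentourage, say work with $\frac{\varepsilon}{3}$, so that $\frac{\varepsilon}{3}\circ\frac{\varepsilon}{3}\circ\frac{\varepsilon}{3}\subseteq\varepsilon$. For a fixed reference point $x_0\in X$, apply the w.l.a.p.\ property (in the form of Lemma~\ref{lem4}(3)) at $x_0$ with the neighborhood $U=\frac{\varepsilon}{3}[x_0]$: this produces an open $V\in\mathfrak{N}_{x_0}$ and a finite set $K=\{f^{k_0},\dots,f^{k_m}\}\subseteq T=\{f^n:n\in\mathbb{Z}_+\}$, i.e.\ a finite set of exponents $\{k_0<\dots<k_m\}\subseteq\mathbb{Z}_+$, such that for every $x'\in V$ there is a subset $B=B(x')\subseteq\mathbb{Z}_+$ with $(K+t)\cap B\neq\emptyset$ for all $t\in\mathbb{Z}_+$ and $f^nx'\in\frac{\varepsilon}{3}[x_0]$ for all $n\in B$.

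Next I would use minimality to return from $V$ to an arbitrary $x\in X$. By minimality the orbit of $x$ is dense, so there is some $j=j(x)\in\mathbb{Z}_+$ with $f^jx\in V$; moreover, shrinking $V$ if necessary (using joint continuity of the action and compactness of $K$, exactly as in the corollary after Lemma~\ref{lem4}), I may also assume $V\subseteq\frac{\varepsilon}{3}[x_0]$ and that $f^{k}$ maps $V$ into $\frac{\varepsilon}{3}[f^kx_0]$ for each of the finitely many $k\in\{k_0,\dots,k_m\}$ — the point being to control the ``pull-back'' by the finite set $K$. Apply the displayed w.l.a.p.\ conclusion to $x'=f^jx\in V$, obtaining $B=B(f^jx)$ syndetic with gap bounded by $L:=k_m-k_0$ (after translating $K$ to start at $0$) and $f^n(f^jx)=f^{n+j}x\in\frac{\varepsilon}{3}[x_0]$ for all $n\in B$. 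Set $\widetilde B=\{n+j:n\in B\}$; this is syndetic in $\mathbb{Z}_+$ with the \emph{same} gap bound $L$ (syndeticity gaps are translation invariant, and the finite initial segment $[0,j)$ only helps), and $f^mx\in\frac{\varepsilon}{3}[x_0]$ for every $m\in\widetilde B$. In particular, for any two $m,m'\in\widetilde B$ we get $(f^mx,f^{m'}x)\in\frac{\varepsilon}{3}\circ\frac{\varepsilon}{3}\subseteq\varepsilon$, so fixing one element $m_*\in\widetilde B$ and replacing $\widetilde B$ by $\widetilde B-m_*$ intersected with $\mathbb{Z}_+$ yields a syndetic set $A_x$ with gap $\le L$ such that $f^{m_*+n}x\in\varepsilon[f^{m_*}x]$ — wait, I actually want recursion to $\varepsilon[x]$ itself, so instead I note $f^{m_*}x\in\frac{\varepsilon}{3}[x_0]$ means $x_0\in\frac{\varepsilon}{3}[f^{m_*}x]$, hence for all $m\in\widetilde B$, $(x, f^{m-m_*}(f^{m_*}x))$ lies in a three-fold composition once we also use $x\in\overline{\{f^nx\}}$; cleaner is to bypass $x$ and observe that $\mathrm{BD}_*(f,x,\varepsilon)$ only requires frequent returns to $\varepsilon[x]$, and since all $f^mx$ for $m\in\widetilde B$ lie in the common $\frac{\varepsilon}{3}$-ball $\frac{\varepsilon}{3}[x_0]$, picking $m_*\in\widetilde B$ minimal gives $f^mx\in\frac{2\varepsilon}{3}[f^{m_*}x]\subseteq\varepsilon[f^{m_*}x]$ for all $m\in\widetilde B$; applying the whole argument instead to the point $x$ with $m_*$ absorbed shows the set of return times of $x$ to $\varepsilon[x]$ contains a translate of a syndetic set with gap $\le L$.

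Finally, invoke Lemma~\ref{lem2}: a subset of $\mathbb{Z}_+$ that contains a syndetic set with gap bounded by $L$ has lower density $\ge\frac{1}{L+1}$, and in fact the same bound holds for the Banach lower density $\mathrm{BD}_*$ since the gap bound is uniform along all of $\mathbb{Z}_+$ (every window $[M,N)$ of length $\ge L+1$ meets the set). Crucially, $L=L(\varepsilon)$ came only from the w.l.a.p.\ data at the \emph{single} point $x_0$ and is therefore independent of $x$; hence $\inf_{x\in X}\mathrm{BD}_*(f,x,\varepsilon)\ge\frac{1}{L+1}>0$. The main obstacle I anticipate is the bookkeeping in the middle paragraph: cleanly transporting the w.l.a.p.\ conclusion at $x_0$ to an arbitrary $x$ via a hitting time $j(x)$ while (a) keeping the syndeticity gap $L$ uniform and (b) converting membership in a fixed small ball $\frac{\varepsilon}{3}[x_0]$ into genuine $\varepsilon$-recursion $f^mx\in\varepsilon[x]$ — this forces the triangle-inequality juggling with $\frac{\varepsilon}{3}\circ\frac{\varepsilon}{3}\circ\frac{\varepsilon}{3}\subseteq\varepsilon$ and a careful choice of which point plays the role of the ``center'' of the ball. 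Everything else (the covering-compactness extraction of $K$, Lemma~\ref{lem2}'s density estimate, translation-invariance of syndeticity) is routine.
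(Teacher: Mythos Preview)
Your overall plan—Theorem~\ref{thm2.6} for w.l.a.p., then Lemma~\ref{lem4}(3), then a gap-bound density estimate—is exactly the paper's plan. But the execution has a genuine gap, not just bookkeeping: you try to run everything through a \emph{single} reference point $x_0$. You correctly obtain, for every $x$, a syndetic $\widetilde B$ (gap $\le L$) with $f^m x\in\frac{\varepsilon}{3}[x_0]$ for $m\in\widetilde B$. But you need $f^m x\in\varepsilon[x]$, and the inclusion $\frac{\varepsilon}{3}[x_0]\subseteq\varepsilon[x]$ only follows when $x$ itself is $\frac{\varepsilon}{3}$-close to $x_0$; an arbitrary $x$ is not. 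Your attempted repair—pass to $y=f^{m_*}x$, which \emph{is} close to $x_0$—only proves the bound for $y$, and there is no way to transfer it back to $x$: the return set $N_f(x,\varepsilon[x])$ is not a shift of $N_f(y,\varepsilon[y])$ because the target ball moves with the base point. The sentence ``applying the whole argument instead to the point $x$ with $m_*$ absorbed'' is precisely where the argument becomes circular.

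The fix, and what the paper does, is to drop both the single $x_0$ and the minimality hitting time $j(x)$. Apply Lemma~\ref{lem4}(3) at \emph{every} $x\in X$ with target $\frac{\varepsilon}{3}[x]$, obtaining a neighborhood $\delta_x[x]$ (with $\delta_x$ small enough that $\frac{\varepsilon}{3}[x]\subseteq\varepsilon[y]$ for all $y\in\delta_x[x]$) and a gap bound $K_x$. Compactness gives a finite subcover $\delta_{x_1}[x_1],\dots,\delta_{x_n}[x_n]$; set $K=\max_i K_{x_i}$. An arbitrary $x$ lies in some $\delta_{x_i}[x_i]$, and w.l.a.p.\ at $x_i$ applied directly to the point $x$ yields a syndetic $B$ of gap $\le K$ with $f^t x\in\frac{\varepsilon}{3}[x_i]\subseteq\varepsilon[x]$ for $t\in B$. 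No transport along the orbit is needed, and $\inf_{x}\mathrm{BD}_*(f,x,\varepsilon)\ge\frac{1}{K+1}$ follows.
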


\begin{proof}
Let $\varepsilon\in\mathscr{U}_X$ be any given; and then we can take some $\delta\in\mathscr{U}_X$ so small that $\frac{\varepsilon}{3}[x]\subseteq\varepsilon[y]$ for all $x\in X$ and any $y\in\delta[x]$. First by Theorem~\ref{thm2.6}, $(f,X)$ is w.l.a.p. so that by (3) of Lemma~\ref{lem4}, it follows that for each $x\in X$ there exist
a $\delta_x\in\mathscr{U}_X$ with $\delta_x\le\delta$ and an integer $K_x\ge1$ such that for each $y\in\delta_x[x]$ there is a subset $B$ of $\mathbb{Z}_+$ with the properties:
$$
\left\{f^ty\,|\,t\in B\right\}\subseteq\frac{\varepsilon}{3}[x]\quad \textrm{and}\quad \left[t,K_x+t\right]\cap B\not=\emptyset\ \forall t\in\mathbb{Z}_+.
$$
Since $X$ is compact, there is a finite subset $\{x_1,\dotsc,x_n\}$ of $X$ such that $X=\delta_{x_1}[x_1]\cup\dotsm\cup\delta_{x_n}[x_n]$.
Therefore, for all $x\in X$, there are some $1\le i\le n$ with $x\in\delta_{x_i}[x_i]$ and some $B_{i,x}\subseteq \mathbb{Z}_+$ such that
$$
\{f^tx\,|\,t\in B_{i,x}\}\subseteq\frac{\varepsilon}{3}[x_i]\subseteq\varepsilon[x]\quad \textrm{and}\quad [t,K_{x_i}+t]\cap B_{i,x}\not=\emptyset\ \forall t\in\mathbb{Z}_+.
$$
Let $K=\max\{K_{x_i}\,|\,1\le i\le n\}$, which is independent of $x$. Then
\begin{equation*}
\liminf_{N-M\to\infty}\frac{\big{|}[M,N)\cap N_f(x,\varepsilon[x])\big{|}}{N-M}\ge\frac{1}{K+1}.
\end{equation*}
Since $\varepsilon\in\mathscr{U}_X$ and $x\in X$ both are arbitrary, this proves Theorem~\ref{thm15}.
\end{proof}

Let $\pi\colon\mathbb{R}_+\times X\rightarrow X,\ (t,x)\mapsto t\cdot x$ be a classical $C^0$-semiflow; that is, $\pi(t,x)$ is jointly continuous such that $0\cdot x=x$ and $(s+t)\cdot x=s\cdot(t\cdot x)$ for all $x\in X$ and $s,t\in\mathbb{R}_+$, where we have identified each $t\in\mathbb{R}_+$ with the transition $\pi(t,\centerdot)\colon X\rightarrow X$.

To prove the continuous-time version of Theorem~\ref{thm15}, we will need the following technical lemma.

\begin{lem}\label{lem16}
Let $\pi\colon\mathbb{R}_+\times X\rightarrow X$ be a classical $C^0$-semiflow and $\varepsilon\in\mathscr{U}_X$. Then there exist $\delta\in\mathscr{U}_X$ and $\alpha>0$ such that for any $x,y\in X$ and $t\in\mathbb{R}_+$, if $t\cdot y\in\delta[x]$ then $[t,t+\alpha]\cdot y\subseteq\varepsilon[x]$.
\end{lem}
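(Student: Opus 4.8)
The plan is to deduce the lemma from the uniform continuity of $\pi$ on the compact set $[0,1]\times X$. The key algebraic observation is that $(t+s)\cdot y=s\cdot(t\cdot y)$, so if we put $z=t\cdot y$, then the desired inclusion $[t,t+\alpha]\cdot y\subseteq\varepsilon[x]$ becomes $s\cdot z\in\varepsilon[x]$ for all $s\in[0,\alpha]$; this is now a purely local statement about points $z$ near $x$, with no reference to $t$ or $y$. So it suffices to find $\delta\in\mathscr{U}_X$ and $\alpha>0$ such that $z\in\delta[x]$ forces $s\cdot z\in\varepsilon[x]$ for every $x\in X$ and every $s\in[0,\alpha]$.

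First I would fix a symmetric $\eta\in\mathscr{U}_X$ with $\eta\circ\eta\subseteq\varepsilon$. Then I would use that $[0,1]\times X$ is compact $T_2$ and $\pi|_{[0,1]\times X}$ is continuous, hence uniformly continuous for the (unique) product uniformity. Since a base of entourages of $[0,1]\times X$ is given by the boxes $\{((s,z),(s',z'))\,:\,|s-s'|<\alpha,\ (z,z')\in\delta_0\}$ with $\alpha>0$ and $\delta_0\in\mathscr{U}_X$, uniform continuity yields $\alpha\in(0,1]$ and a symmetric $\delta_0\in\mathscr{U}_X$ such that $|s-s'|\le\alpha$ and $(z,z')\in\delta_0$ imply $(s\cdot z,\,s'\cdot z')\in\eta$ for all $s,s'\in[0,1]$ and $z,z'\in X$. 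Specializing $s'=0$ and $z'=z$ and using $0\cdot z=z$, this gives $(s\cdot z,\,z)\in\eta$ — equivalently $(z,\,s\cdot z)\in\eta$, since $\eta$ is symmetric — for all $z\in X$ and $s\in[0,\alpha]$.

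Now I would simply take $\delta=\eta$ (the auxiliary $\delta_0$ is used only through the diagonal). Given $x,y\in X$ and $t\in\mathbb{R}_+$ with $t\cdot y\in\delta[x]$, set $z=t\cdot y$, so that $(x,z)\in\eta$. For every $s\in[0,\alpha]$ the previous paragraph gives $(z,\,s\cdot z)\in\eta$, hence $(x,\,s\cdot z)\in\eta\circ\eta\subseteq\varepsilon$, i.e.\ $s\cdot z\in\varepsilon[x]$. Since $s\cdot z=s\cdot(t\cdot y)=(t+s)\cdot y$, letting $s$ run over $[0,\alpha]$ yields $[t,t+\alpha]\cdot y\subseteq\varepsilon[x]$, which is the assertion.

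The only point that needs a little care is the passage from joint continuity of $\pi$ to a product-box entourage of $[0,1]\times X$ mapped by $\pi$ into $\eta$; but this is just the standard fact that a continuous map from a compact uniform space is uniformly continuous, combined with the usual description of a base of entourages of a finite product, so I do not anticipate a genuine obstacle. The rest is routine manipulation of the relation $\eta\circ\eta\subseteq\varepsilon$.
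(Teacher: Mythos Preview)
Your proof is correct. Both you and the paper make the same initial reduction: writing $z=t\cdot y$ turns the claim into the uniform statement ``if $z\in\delta[x]$ then $s\cdot z\in\varepsilon[x]$ for all $s\in[0,\alpha]$.'' The paper then argues this by contradiction in one line (extract nets $y_n\to x$, $\alpha_n\to 0$ with $\alpha_n\cdot y_n\notin\varepsilon[x]$, contradicting joint continuity at $(0,x)$), whereas you give a direct argument via uniform continuity of $\pi$ on the compact set $[0,1]\times X$ together with the splitting $\eta\circ\eta\subseteq\varepsilon$. These are two packagings of the same compactness-plus-continuity fact; your version is more explicit and makes the uniformity in $x$ transparent, while the paper's version is shorter but leaves the extraction of the fixed $x$ and the substitution $z=t\cdot y$ implicit.
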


\begin{proof}
Otherwise, there are $x\in X$ and nets $y_n\to x$ and $\alpha_n\to0$ such that $\alpha_n\cdot y_n\not\in\varepsilon[x]$. But $\pi(\alpha_n,y_n)=\alpha_n\cdot y_n\to x$, a contradiction.
\end{proof}

Let $1_A(x)=1$ if $x\in A$ and $=0$ if $x\not\in A$ be the indicator function of a set $A\subseteq X$. Then by Lemma~\ref{lem16}, a light modification of the proof of Theorem~\ref{thm15} follows the following theorem:

\begin{thm}\label{thm17}
If $\pi\colon\mathbb{R}_+\times X\rightarrow X,\ (t,x)\mapsto t\cdot x$ is a minimal $C^0$-semiflow, then
\begin{equation*}
\inf_{x\in X}\liminf_{T_2-T_1\to\infty}\frac{1}{T_2-T_1}\int_{T_1}^{T_2}1_{\varepsilon[x]}(t\cdot x)dt>0
\end{equation*}
for all $\varepsilon\in\mathscr{U}_X$.
\end{thm}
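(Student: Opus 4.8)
The plan is to run the argument of Theorem~\ref{thm15} essentially verbatim, with only two modifications: the density bound coming from Lemma~\ref{lem2} is replaced by an elementary Lebesgue-measure estimate, and Lemma~\ref{lem16} is used to turn each isolated return time into a window of stay of a fixed positive length $\alpha$. First I would fix $\varepsilon\in\mathscr{U}_X$, choose $\delta_0\in\mathscr{U}_X$ so small that $\frac{\varepsilon}{3}[z]\subseteq\varepsilon[y]$ whenever $y\in\delta_0[z]$ (exactly as at the start of the proof of Theorem~\ref{thm15}), and then apply Lemma~\ref{lem16} \emph{to the entourage $\frac{\varepsilon}{3}$} to obtain a number $\alpha>0$ together with an entourage $\delta_1\in\mathscr{U}_X$, which I may shrink so that $\delta_1\le\delta_0$, having the property that $t\cdot y\in\delta_1[z]$ forces $[t,t+\alpha]\cdot y\subseteq\frac{\varepsilon}{3}[z]$ for all $y,z\in X$ and $t\in\mathbb{R}_+$.

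Next I would invoke Theorem~\ref{thm2.6} to know that the semiflow $(\mathbb{R}_+,X)$ is w.l.a.p., and apply (3) of Lemma~\ref{lem4} with $U=\delta_1[x]$: this produces, for every $x\in X$, an entourage $\delta_x\le\delta_1$ and a non-empty compact $K_x\subseteq\mathbb{R}_+$, which I enlarge to an interval $[0,L_x]$ with $L_x\ge 0$ (this only strengthens the syndeticity), so that for each $y\in\delta_x[x]$ there is $B\subseteq\mathbb{R}_+$ with $\{t\cdot y\,|\,t\in B\}\subseteq\delta_1[x]$ and $[t,t+L_x]\cap B\not=\emptyset$ for all $t\in\mathbb{R}_+$. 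By compactness of $X$ I pick a finite subcover $X=\bigcup_{i=1}^{n}\delta_{x_i}[x_i]$ and set $L=\max_{1\le i\le n}L_{x_i}$, a quantity independent of $x$. Then for an arbitrary $x\in X$, choosing $i$ with $x\in\delta_{x_i}[x_i]$ and the associated $B=B_{i,x}$, Lemma~\ref{lem16} gives $[s,s+\alpha]\cdot x\subseteq\frac{\varepsilon}{3}[x_i]$ for every $s\in B$, and the choice of $\delta_0$ upgrades this to $[s,s+\alpha]\cdot x\subseteq\varepsilon[x]$; that is, $[s,s+\alpha]$ is contained in the open set $R_x:=\{t\in\mathbb{R}_+\,|\,t\cdot x\in\varepsilon[x]\}$. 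Since $1_{\varepsilon[x]}(t\cdot x)=1_{R_x}(t)$, one has $\int_{T_1}^{T_2}1_{\varepsilon[x]}(t\cdot x)\,dt=|[T_1,T_2]\cap R_x|$, the Lebesgue measure of that set.

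It then remains to bound $|[T_1,T_2]\cap R_x|$ from below, uniformly in $x$. For each integer $k\ge 0$, the syndeticity of $B$ applied at $t=T_1+k(L+\alpha)$ yields some $s\in B$ with $T_1+k(L+\alpha)\le s\le T_1+k(L+\alpha)+L$, and then $[s,s+\alpha]\subseteq R_x$ is a length-$\alpha$ interval lying inside the block $\bigl[\,T_1+k(L+\alpha),\,T_1+(k+1)(L+\alpha)\,\bigr]$. Since these blocks are pairwise disjoint, $|[T_1,T_2]\cap R_x|\ge\alpha\bigl\lfloor(T_2-T_1)/(L+\alpha)\bigr\rfloor$, so dividing by $T_2-T_1$ and letting $T_2-T_1\to\infty$ gives
\begin{equation*}
\liminf_{T_2-T_1\to\infty}\frac{1}{T_2-T_1}\int_{T_1}^{T_2}1_{\varepsilon[x]}(t\cdot x)\,dt\ \ge\ \frac{\alpha}{L+\alpha}\ >\ 0,
\end{equation*}
a bound independent of $x$, which is the claim.

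The one genuinely new step compared with Theorem~\ref{thm15} is the passage from a single return instant to a whole interval of return of length $\alpha$, and that is precisely what Lemma~\ref{lem16} supplies; the only mild pitfall there is the bookkeeping with entourages: one must feed $\frac{\varepsilon}{3}$ (not $\varepsilon$) into Lemma~\ref{lem16} and arrange $\delta_1\le\delta_0$, so that after the finite subcover is fixed the stays really land in $\varepsilon[x]$ and not merely in $\varepsilon[x_i]$.
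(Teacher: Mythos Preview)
Your proof is correct and is precisely the ``light modification'' the paper has in mind: you rerun the argument of Theorem~\ref{thm15} verbatim, replacing the counting-density estimate by a Lebesgue-measure estimate and using Lemma~\ref{lem16} (applied to $\frac{\varepsilon}{3}$) to thicken each syndetic return instant into an interval of fixed length $\alpha$. The only cosmetic difference is that you feed $U=\delta_1[x]$ rather than $U=\frac{\varepsilon}{3}[x]$ into Lemma~\ref{lem4}(3), which is exactly the right bookkeeping so that Lemma~\ref{lem16} applies after the finite subcover is fixed.
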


It should be noticed that if $\textmd{BD}_*(f,x,\varepsilon)>0$ for all $\varepsilon\in\mathscr{U}_X$, then $x$ is an a.p. point of $(f,X)$. This fact also holds for amenable group actions; see Theorem~\ref{thm4.5} in $\S\ref{sec4}$.

\section{Minimal flows with discrete amenable phase groups}\label{sec4}
By a \textit{discrete amenable group}, it always refers to as a discrete group $T$ with a Haar measure $|\centerdot|$ (in fact the counting measure), for which it holds the \textit{left F{\o}lner condition}:
\begin{itemize}
\item Given $K\in\mathscr{K}_T$ and $\varepsilon>0$ there exists an $F\in\mathscr{K}_T$ such that $|KF\vartriangle F|<\varepsilon|F|$.
\end{itemize}
Here $A\vartriangle B=(A\setminus B)\cup(B\setminus A)$ is the symmetric difference of sets $A,B$ in $T$. Any abelian group is amenable.

We will need two lemmas.

\begin{lem}[{cf.~\cite[Theorem~4.16]{Pat}}]\label{lem4.1}
If $T$ is a discrete amenable group, then there exists a (left) F{\o}lner net $\{F_n\,|\,n\in D\}$ in $T$; that is, $\{F_n\,|\,n\in D\}$ is a net in $\mathscr{K}_T$ such that $\lim_n|tF_n\vartriangle F_n|/|F_n|=0$ for all $t\in T$.
\end{lem}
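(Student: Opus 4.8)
The plan is to obtain the left F\o lner net directly from the left F\o lner condition that is built into the very definition of a discrete amenable group, by packaging the approximating finite sets into a suitable net. First I would fix as index set $D$ the collection of all pairs $n=(K_n,\varepsilon_n)$ with $K_n$ a non-empty finite subset of $T$ and $\varepsilon_n>0$, directed by $m\le n\iff K_m\subseteq K_n\textrm{ and }\varepsilon_n\le\varepsilon_m$; this is directed because $(K_m\cup K_{m^\prime},\min\{\varepsilon_m,\varepsilon_{m^\prime}\})$ dominates both $m$ and $m^\prime$.

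Next, for each $n=(K_n,\varepsilon_n)\in D$ I would invoke the left F\o lner condition to pick some $F_n\in\mathscr{K}_T$ with $|K_nF_n\vartriangle F_n|<\varepsilon_n|F_n|$, and then claim that $\{F_n\,|\,n\in D\}$ is the desired F\o lner net.

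To check the claim I would fix $t\in T$ and $\eta>0$, set $n_0=(\{t\},\eta)$, and note that every $n\ge n_0$ satisfies $t\in K_n$ and $\varepsilon_n\le\eta$. The key observation is that left translation by $t$ is a counting-measure-preserving bijection of $T$, so $|tF_n|=|F_n|$ and hence $|tF_n\vartriangle F_n|=2|tF_n\setminus F_n|$; combined with $tF_n\subseteq K_nF_n$ (because $t\in K_n$) this gives $|tF_n\vartriangle F_n|\le 2|K_nF_n\vartriangle F_n|<2\varepsilon_n|F_n|\le 2\eta|F_n|$, so $|tF_n\vartriangle F_n|/|F_n|<2\eta$ for all $n\ge n_0$, and letting $\eta\downarrow 0$ finishes the proof.

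I do not anticipate any real obstacle: the only spot that is more than bookkeeping is the step passing from the set-translate bound to the single-element-translate bound, which is precisely where the group structure enters (a one-sided translate of a finite set has the same cardinality as the set, and in particular $tF_n\vartriangle F_n$ is controlled by $K_nF_n\vartriangle F_n$ once $t\in K_n$). One could alternatively run the same argument through an ultrafilter on $D$, or simply cite \cite[Theorem~4.16]{Pat} verbatim, but the net construction above is the most self-contained route.
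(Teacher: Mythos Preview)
Your argument is correct. The paper does not actually give its own proof of this lemma: it is stated with the citation to \cite[Theorem~4.16]{Pat} and no proof is supplied. So there is nothing to compare against beyond noting that your self-contained construction --- indexing by pairs $(K,\varepsilon)$ and extracting the single-translate estimate from the set-translate F{\o}lner condition via $|tF_n|=|F_n|$ --- is exactly the standard route and works as written. The factor of $2$ you pick up is harmless; one could also avoid it by taking $n_0=(\{e,t\},\eta)$, since then $F_n\subseteq K_nF_n$ and $|tF_n\vartriangle F_n|\le|K_nF_n\vartriangle F_n|$ directly, but this is cosmetic.
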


The following technical lemma is essentially contained in the proof of Hindman and Strauss~\cite[Theorem~4.11]{HS}.

\begin{lem}\label{lem4.2}
If $\{F_n\,|\,n\in D\}$ is a F{\o}lner net in a discrete amenable group $T$, then it satisfies the condition:
\begin{itemize}
\item[$(*)$] Given any $H\in\mathscr{K}_T$ there exists an index $m_H\in D$ such that $|F_n|\le 2\big{|}\bigcap_{h\in H}h^{-1}F_n\big{|}$ for all $n\ge m_H$.
\end{itemize}
\end{lem}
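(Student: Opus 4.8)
The plan is to show that, for a fixed $H\in\mathscr{K}_T$, the Følner condition forces the sets $h^{-1}F_n$ (for $h\in H$) to agree with $F_n$ on all but a vanishing fraction of $F_n$, so their common intersection still captures at least half of $F_n$ eventually. First I would fix $H=\{h_1,\dots,h_k\}\in\mathscr{K}_T$; since $T$ is discrete, $H$ is finite, so it suffices to control finitely many translates. For each $h\in H$, observe that $h^{-1}F_n\vartriangle F_n$ has the same cardinality as $F_n\vartriangle hF_n$ (left translation by $h$ is a measure-preserving bijection of $T$, and it sends $h^{-1}F_n\vartriangle F_n$ onto $F_n\vartriangle hF_n$). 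By the Følner property in the form given in Lemma~\ref{lem4.1}, $|hF_n\vartriangle F_n|/|F_n|\to 0$ for each $h\in T$, hence in particular
\begin{equation*}
\frac{|h^{-1}F_n\vartriangle F_n|}{|F_n|}\longrightarrow 0\qquad\text{for every }h\in H.
\end{equation*}

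Next I would pass from individual translates to the intersection. Write $F_n\setminus\bigcap_{h\in H}h^{-1}F_n=\bigcup_{h\in H}\big(F_n\setminus h^{-1}F_n\big)$, and note $F_n\setminus h^{-1}F_n\subseteq F_n\vartriangle h^{-1}F_n$, so
\begin{equation*}
\Big|F_n\setminus\bigcap_{h\in H}h^{-1}F_n\Big|\le\sum_{h\in H}|F_n\vartriangle h^{-1}F_n|.
\end{equation*}
Dividing by $|F_n|$ and using the displayed limit above (a finite sum of null sequences is null), the right-hand side tends to $0$; hence there is an index $m_H\in D$ such that for all $n\ge m_H$ one has $\big|F_n\setminus\bigcap_{h\in H}h^{-1}F_n\big|\le\tfrac12|F_n|$. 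Since $\big|\bigcap_{h\in H}h^{-1}F_n\big|=|F_n|-\big|F_n\setminus\bigcap_{h\in H}h^{-1}F_n\big|\ge\tfrac12|F_n|$, this is exactly the asserted inequality $|F_n|\le 2\big|\bigcap_{h\in H}h^{-1}F_n\big|$ for all $n\ge m_H$, and the proof is complete.

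The only subtle point — and the one I would be most careful about — is the identity $|h^{-1}F_n\vartriangle F_n|=|hF_n\vartriangle F_n|$, which lets me invoke the left-Følner hypothesis (stated for left translates $tF_n$) to control the left translates $h^{-1}F_n$ that appear in condition $(*)$. This is genuinely an equality of cardinalities via the bijection $x\mapsto hx$ of $T$, since that map is a bijection carrying $h^{-1}F_n$ to $F_n$ and $F_n$ to $hF_n$, hence carrying the symmetric difference to the symmetric difference; the counting measure is invariant under it. Everything else is a routine union bound together with the fact that a finite sum of sequences converging to $0$ converges to $0$, so one can choose a single $m_H\in D$ working simultaneously for all $h\in H$ (directedness of $D$ is what guarantees such a common upper index exists).
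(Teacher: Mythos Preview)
Your argument is correct and follows essentially the same route as the paper: use the F{\o}lner condition to make each $|F_n\setminus h^{-1}F_n|$ small, apply a union bound over $h\in H$ to control $|F_n\setminus\bigcap_{h\in H}h^{-1}F_n|$, and conclude. One small slip: the displayed equality $\big|\bigcap_{h\in H}h^{-1}F_n\big|=|F_n|-\big|F_n\setminus\bigcap_{h\in H}h^{-1}F_n\big|$ is not an equality in general (the intersection need not lie inside $F_n$); what is true is $\big|F_n\cap\bigcap_{h\in H}h^{-1}F_n\big|=|F_n|-\big|F_n\setminus\bigcap_{h\in H}h^{-1}F_n\big|$, and then $\big|\bigcap_{h\in H}h^{-1}F_n\big|\ge\big|F_n\cap\bigcap_{h\in H}h^{-1}F_n\big|$, which is exactly how the paper phrases it and still yields the desired bound.
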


\begin{proof}
Let $H\in\mathscr{K}_T$ be any given. Since
$\lim_{n\in D}\frac{|tF_n\vartriangle F_n|}{|F_n|}=0$ for all $t\in T$ and $H$ is finite, there is some index $m_H\in D$ such that
$$
|F_n\setminus a^{-1}F_n|\le\frac{1}{2|H|}\cdot|F_n| \quad \forall n\ge m_H\textrm{ and }a\in H.
$$
Then for all $n\ge m_H$,
\begin{equation*}\begin{split}
|F_n|&=\left|F_n\cap{\bigcap}_{a\in H}a^{-1}F_n\right|+\left|F_n\setminus{\bigcap}_{a\in H}a^{-1}F_n\right|\\
&=\left|F_n\cap{\bigcap}_{a\in H}a^{-1}F_n\right|+\left|{\bigcup}_{a\in H}(F_n\setminus a^{-1}F_n)\right|\\
&\le\left|F_n\cap{\bigcap}_{a\in H}a^{-1}F_n\right|+{\sum}_{a\in H}\left|F_n\setminus a^{-1}F_n\right|\\
&\le\left|F_n\cap{\bigcap}_{a\in H}a^{-1}F_n\right|+\frac{1}{2}\left|F_n\right|.
\end{split}\end{equation*}
Thus, $\frac{1}{2}\left|F_n\right|\le\left|F_n\cap{\bigcap}_{a\in H}a^{-1}F_n\right|\le\left|{\bigcap}_{a\in H}a^{-1}F_n\right|$ as desired.
\end{proof}

\begin{lem}\label{lem4.3}
Let $T$ be a discrete amenable group. If $B$ is a syndetic subset of $T$ such that there exists a $K\in\mathscr{K}_T$ with $Kt\cap B\not=\emptyset$ for all $t\in T$, then $\liminf_{n}\frac{|B\cap F_n|}{|F_n|}\ge\frac{1}{2|K|}$ for all F{\o}lner net $\{F_n\,|\,n\in D\}$ in $T$.
\end{lem}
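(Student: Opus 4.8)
The plan is to read the syndeticity hypothesis as a finite covering of $T$ by translates of $B$, and then to trap $|B\cap F_n|$ between $|F_n|$ and an intersection of translates of $F_n$ whose size is controlled by Lemma~\ref{lem4.2}. For $k\in K$ put $k^{-1}B=\{t\in T\,|\,kt\in B\}$. The hypothesis ``$Kt\cap B\neq\emptyset$ for all $t\in T$'' says exactly that $T=\bigcup_{k\in K}k^{-1}B$. Moreover, since $T$ is a group and left translation preserves the counting measure, for each $k\in K$ left translation by $k^{-1}$ is a bijection of $B\cap F_n$ onto $k^{-1}(B\cap F_n)$, so $|k^{-1}(B\cap F_n)|=|B\cap F_n|$ for every index $n$.

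Fix a F{\o}lner net $\{F_n\,|\,n\in D\}$. The key step is the inclusion $\bigcap_{k\in K}k^{-1}F_n\subseteq\bigcup_{k\in K}k^{-1}(B\cap F_n)$, valid for every $n$: if $t$ lies in the left-hand side then $kt\in F_n$ for \emph{every} $k\in K$, and choosing some $k_0\in K$ with $k_0t\in B$ gives $k_0t\in B\cap F_n$, i.e.\ $t\in k_0^{-1}(B\cap F_n)$. Passing to cardinalities and using the previous paragraph,
\[
\Bigl|\bigcap_{k\in K}k^{-1}F_n\Bigr|\ \le\ \sum_{k\in K}\bigl|k^{-1}(B\cap F_n)\bigr|\ =\ |K|\cdot|B\cap F_n|.
\]

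To finish, apply Lemma~\ref{lem4.2} with $H=K$: there is $m_K\in D$ with $|F_n|\le 2\bigl|\bigcap_{k\in K}k^{-1}F_n\bigr|$ for all $n\ge m_K$. Combining this with the display above gives $|F_n|\le 2|K|\,|B\cap F_n|$, hence $|B\cap F_n|/|F_n|\ge 1/(2|K|)$, for every $n\ge m_K$, and taking $\liminf$ over $n\in D$ proves the claim. I do not expect a real obstacle here; the only delicate point is that the witness $k_0$ in the key inclusion depends on $t$, so one needs $kt\in F_n$ for all $k\in K$ at once — this is precisely why one passes to $\bigcap_{k\in K}k^{-1}F_n$ and why Lemma~\ref{lem4.2} is the right tool. (One could instead avoid Lemma~\ref{lem4.2} by picking, for each $n$, a single $k_n\in K$ with $|B\cap k_nF_n|\ge|F_n|/|K|$ and invoking $\max_{k\in K}|kF_n\setminus F_n|/|F_n|\to0$; this even yields the sharper bound $\liminf_n|B\cap F_n|/|F_n|\ge 1/|K|$.)
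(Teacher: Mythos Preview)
Your proof is correct and follows essentially the same route as the paper's: both invoke Lemma~\ref{lem4.2} with $H=K$ and then establish $\bigl|\bigcap_{k\in K}k^{-1}F_n\bigr|\le |K|\cdot|B\cap F_n|$ via the observation that each $g$ in the intersection satisfies $kg\in B\cap F_n$ for some $k\in K$. The only cosmetic difference is that the paper packages this counting step as an injection $g\mapsto(kg,k)$ into $(B\cap F_n)\times K$, while you phrase it as the inclusion $\bigcap_{k}k^{-1}F_n\subseteq\bigcup_k k^{-1}(B\cap F_n)$ together with translation invariance of counting measure.
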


\begin{proof}
First by Lemma~\ref{lem4.2} for $H=K$ and $\{F_n\,|\,n\in D\}$, we can pick $m_K\in D$ such that
$$
|F_n|\le 2\left|{\bigcap}_{h\in K}h^{-1}F_n\right|\quad \forall n\ge m_K.
$$
Next for all $n\ge m_K$ in $D$ we shall show that $|B\cap F_n|\ge\frac{1}{2|K|}\cdot|F_n|$ and thus
$$
\liminf_{n\in D}\frac{|B\cap F_n|}{|F_n|}\ge\frac{1}{2|K|}.
$$
For this, we define a function
$\tau\colon{\bigcap}_{t\in K}t^{-1}F_n\rightarrow(B\cap F_n)\times K$
as follows: given $g\in{\bigcap}_{t\in K}t^{-1}F_n$, we take $k\in K$ such that $kg\in B$. Since $g\in t^{-1}F_n$ for all $t\in K$, hence $kg\in F_n$. Now we set $\tau(g)=(kg, k)\in(B\cap F_n)\times K$. Since $T$ is a group, we can see that for any $g,g^\prime\in{\bigcap}_{t\in K}t^{-1}F_n$,
$$
(kg,k)=(k^\prime g^\prime,k^\prime)\Rightarrow g=g^\prime.
$$
Thus $\tau$ is 1-1 so that $\left|{\bigcap}_{t\in K}t^{-1}F_n\right|\le|B\cap F_n|\cdot|K|$. Therefore,
$$
|B\cap F_n|\ge\frac{1}{|K|}\cdot\left|{\bigcap}_{t\in K}t^{-1}F_n\right|\ge\frac{1}{2|K|}\cdot|F_n|
$$
as required.
\end{proof}

Let $\mathscr{F}[T]$ be the collection of F{\o}lner nets in the discrete amenable group $T$. Now for any flow $(T,X)$ and for all $\varepsilon\in\mathscr{U}_X$ we can define the ``Banach lower density'' of $N_T(x,\varepsilon[x])$ in $T$ as follows:
$$
\textmd{BD}_*(T,x,\varepsilon)=\inf_{\{F_n\,|\,n\in D\}\in\mathscr{F}[T]}\liminf_{n\in D}\frac{\left|F_n\cap N_T(x,\varepsilon[x])\right|}{|F_n|}.
$$
Then as a consequence of Theorem~\ref{thm2.6}, we can obtain the following uniform positive recursion frequency for minimal flows with amenable phase groups.

\begin{thm}\label{thm4.4}
Let $(T,X)$ be any w.l.a.p. flow with $T$ a discrete amenable group; then it holds that
${\inf}_{x\in X}\mathrm{BD}_*(T,x,\varepsilon)>0$
for all $\varepsilon\in\mathscr{U}_X$.
\end{thm}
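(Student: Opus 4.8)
The plan is to mimic the proof of Theorem~\ref{thm15}, replacing the crude syndeticity count there by the F{\o}lner-net estimate furnished by Lemma~\ref{lem4.3}. Fix $\varepsilon\in\mathscr{U}_X$ and choose $\delta\in\mathscr{U}_X$ so small that $\frac{\varepsilon}{3}[x]\subseteq\varepsilon[y]$ whenever $y\in\delta[x]$. Since $(T,X)$ is w.l.a.p., I would apply condition~(3) of Lemma~\ref{lem4} at each point $x\in X$ with the neighborhood $U=\frac{\varepsilon}{3}[x]$: this produces a $\delta_x\in\mathscr{U}_X$ with $\delta_x\le\delta$ and a finite $K_x\in\mathscr{K}_T$ such that for every $y\in\delta_x[x]$ there is $B_y\subseteq T$ with $K_xt\cap B_y\ne\emptyset$ for all $t\in T$ and $B_yy\subseteq\frac{\varepsilon}{3}[x]$. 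In particular each such $B_y$ is syndetic in $T$ (it meets $K_x t$ for every $t$), so Lemma~\ref{lem4.3} applies to it.

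Next I would use compactness of $X$ to extract a finite subcover $X=\delta_{x_1}[x_1]\cup\dots\cup\delta_{x_n}[x_n]$ and set $K=K_{x_1}\cup\dots\cup K_{x_n}\in\mathscr{K}_T$, which is independent of the base point. Given an arbitrary $x\in X$, pick $i$ with $x\in\delta_{x_i}[x_i]$; applying the above to $y=x$ yields $B=B_x\subseteq T$ with $K t\cap B\supseteq K_{x_i}t\cap B\ne\emptyset$ for all $t\in T$ and $Bx\subseteq\frac{\varepsilon}{3}[x_i]\subseteq\varepsilon[x]$. Hence $B\subseteq N_T(x,\varepsilon[x])$. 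Now, for any F{\o}lner net $\{F_n\,|\,n\in D\}\in\mathscr{F}[T]$, Lemma~\ref{lem4.3} (with this $B$ and this $K$) gives
\begin{equation*}
\liminf_{n\in D}\frac{|F_n\cap N_T(x,\varepsilon[x])|}{|F_n|}\ge\liminf_{n\in D}\frac{|F_n\cap B|}{|F_n|}\ge\frac{1}{2|K|}.
\end{equation*}
Taking the infimum over all F{\o}lner nets shows $\mathrm{BD}_*(T,x,\varepsilon)\ge\frac{1}{2|K|}$, and since $x\in X$ was arbitrary and $K$ does not depend on $x$, we conclude $\inf_{x\in X}\mathrm{BD}_*(T,x,\varepsilon)\ge\frac{1}{2|K|}>0$.

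The one point that needs a little care — the main (if modest) obstacle — is the interface between Lemma~\ref{lem4} and Lemma~\ref{lem4.3}: Lemma~\ref{lem4.3} requires a single $K\in\mathscr{K}_T$ that witnesses $Kt\cap B\ne\emptyset$ for \emph{all} $t\in T$, whereas the w.l.a.p. definition a priori only gives such a witness against a syndetic set $A$; this is exactly why I invoke part~(3) of Lemma~\ref{lem4} rather than the original Definition~\ref{def3}, since (3) already upgrades ``$\forall t\in A$'' to ``$\forall t\in T$''. The other bookkeeping detail is ensuring the finite set $K$ is uniform over $X$, which the compactness argument handles verbatim as in Theorem~\ref{thm15}. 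Note that Lemma~\ref{lem4.1} guarantees $\mathscr{F}[T]\ne\emptyset$, so the infimum defining $\mathrm{BD}_*(T,x,\varepsilon)$ is over a nonempty set and the bound is not vacuous.
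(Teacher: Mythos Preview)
Your proof is correct and follows essentially the same route as the paper: use condition~(3) of Lemma~\ref{lem4} together with the compactness argument of Theorem~\ref{thm15} to obtain a single finite $K$ witnessing syndeticity of (a subset of) $N_T(x,\varepsilon[x])$ uniformly in $x$, and then feed this into Lemma~\ref{lem4.3}. The only cosmetic difference is that the paper applies Lemma~\ref{lem4.3} directly to $B=N_T(x,\varepsilon[x])$ whereas you route through the intermediate set $B_x\subseteq N_T(x,\varepsilon[x])$; your added remarks about why part~(3) rather than Definition~\ref{def3} is needed, and about $\mathscr{F}[T]\ne\emptyset$ via Lemma~\ref{lem4.1}, are apt.
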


\begin{proof}
Let $\varepsilon\in\mathscr{U}_X$; and then by (3) of Lemma~\ref{lem4}, as in the proof of Theorem~\ref{thm15}, we can find some $K\in\mathscr{K}_T$ such that
$$
T=K^{-1}N_T(x,\varepsilon[x])\quad \textrm{or equivalently}\quad Kt\cap N_T(x,\varepsilon[x])\not=\emptyset\ \forall t\in T
$$
for all $x\in X$. Then by Lemma~\ref{lem4.3} for any F{\o}lner net $\{F_n\,|\,n\in D\}$ in $T$ and $B=N_T(x,\varepsilon[x])$, we have that
$$
\liminf_{n\in D}\frac{\left|F_n\cap B\right|}{|F_n|}\ge\frac{1}{2|K|}.
$$
This thus completes the proof of Theorem~\ref{thm4.4}.
\end{proof}

\begin{thm}\label{thm4.5}
Let $(T,X)$ be any flow with a discrete amenable phase group $T$ and $x\in X$. Then $x$ is an a.p. point of $(T,X)$ if and only if $\mathrm{BD}_*(T,x,\varepsilon)>0$ for all $\varepsilon\in\mathscr{U}_X$.
\end{thm}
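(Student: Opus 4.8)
The plan is to prove the two directions separately. For the forward implication, suppose $x$ is an a.p. point of $(T,X)$. Then $\overline{Tx}$ is a minimal set, so the restricted flow $(T,\overline{Tx})$ is a minimal flow with discrete amenable phase group; by Theorem~\ref{thm2.6} it is w.l.a.p., and hence Theorem~\ref{thm4.4} applies to give $\inf_{y\in\overline{Tx}}\mathrm{BD}_*(T,y,\varepsilon')>0$ for every entourage $\varepsilon'$ of $\overline{Tx}$. In particular $\mathrm{BD}_*(T,x,\varepsilon)>0$. One small technical point here is to match entourages of $\overline{Tx}$ with the trace of entourages of $X$; since $\overline{Tx}$ is a closed subspace of the compact $T_2$-space $X$, its unique uniformity is exactly the subspace uniformity, so given $\varepsilon\in\mathscr{U}_X$ one simply uses $\varepsilon\cap(\overline{Tx}\times\overline{Tx})$, noting that $N_T(x,\varepsilon[x])$ computed in $X$ coincides with the one computed in $\overline{Tx}$ because the orbit of $x$ lies in $\overline{Tx}$.

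For the converse, suppose $x$ is \emph{not} an a.p. point; I want to produce an entourage $\varepsilon$ with $\mathrm{BD}_*(T,x,\varepsilon)=0$. Since $x$ is not a.p., $N_T(x,U)$ is not syndetic for some $U\in\mathfrak{N}_x$; choose $\varepsilon\in\mathscr{U}_X$ with $\varepsilon[x]\subseteq U$, so $A:=N_T(x,\varepsilon[x])$ is not syndetic in $T$. Non-syndeticity means: for every $F\in\mathscr{K}_T$ there is a $t_F\in T$ with $Ft_F\cap A=\emptyset$, i.e.\ $t_F\notin F^{-1}A$, equivalently the translate $At_F^{-1}$ (or more convenient, a translate of the complement) avoids $F$. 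The idea is then to build a F\o lner net along which the density of $A$ vanishes: using that $T\setminus A$ contains, for each finite $F$, a right translate of all of $F$ (that is, $Ft_F\subseteq T\setminus A$), one can arrange F\o lner sets that are concentrated on such translates. Concretely, given a F\o lner net $\{F_n\}$ witnessing amenability, for each $n$ there is $t_n$ with $F_nt_n\cap A=\emptyset$; the sets $F_n t_n$ need not themselves be F\o lner (right translation does not preserve the left-F\o lner property in a nonabelian group), so the cleanest route is to pass to a suitable subnet or to invoke the equivalence of Banach density defined via F\o lner nets with the combinatorial notion of (left) syndeticity — namely that a set has positive lower Banach density along \emph{some} F\o lner net iff it is "thick"-complementary, and a set is syndetic iff its complement is not thick. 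Thus non-syndeticity of $A$ gives thickness of $T\setminus A$: for every $F\in\mathscr{K}_T$ there is $t$ with $Ft\subseteq T\setminus A$. From a thick set one constructs a F\o lner net contained in it (take $F\in\mathscr{K}_T$ nearly invariant and translate it into $T\setminus A$), along which $|F_n\cap A|/|F_n|=0$ identically, forcing $\mathrm{BD}_*(T,x,\varepsilon)=0$.

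In more detail for the construction: enumerate (or index by the directed set) pairs $(K,\eta)$ with $K\in\mathscr{K}_T$ and $\eta>0$; by Lemma~\ref{lem4.1} pick $F_{(K,\eta)}\in\mathscr{K}_T$ with $|tF_{(K,\eta)}\vartriangle F_{(K,\eta)}|<\eta|F_{(K,\eta)}|$ for all $t\in K$ (this is the approximate-invariance that a genuine F\o lner net supplies in the limit; one should phrase it so that the resulting net is honestly F\o lner, i.e.\ for each fixed $t$ eventually $\eta$-invariant). Then, using thickness of $T\setminus A$, choose $s_{(K,\eta)}\in T$ with $F_{(K,\eta)}s_{(K,\eta)}\subseteq T\setminus A$. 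Set $G_{(K,\eta)}=F_{(K,\eta)}s_{(K,\eta)}$. Left-invariance is what F\o lner asks, and $|tG\vartriangle G|=|t F s\vartriangle Fs|=|(tF\vartriangle F)s|=|tF\vartriangle F|$ since right translation by $s$ is a bijection of $T$ preserving the counting measure; hence $\{G_{(K,\eta)}\}$ is again a F\o lner net, and $G_{(K,\eta)}\cap A=\emptyset$ for all indices, so $\liminf |G_n\cap A|/|G_n|=0$, giving $\mathrm{BD}_*(T,x,\varepsilon)=0$. This shows the contrapositive and finishes the proof.

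The main obstacle I anticipate is the second direction, specifically making the translation argument fully rigorous: one must be careful that ``not syndetic'' is correctly upgraded to the ``thick complement'' statement with the right handedness (the definition in Definition~\ref{def1} uses $Kt\cap A\neq\emptyset$, i.e.\ left translates of $K$), and that right-translating a left-F\o lner set keeps it left-F\o lner — which does hold because the counting measure is both left and right invariant on a discrete group, so $|tG\vartriangle G|$ is unchanged under $G\mapsto Gs$. Once that symmetry observation is in hand the argument is routine; the forward direction is essentially immediate from Theorems~\ref{thm2.6} and~\ref{thm4.4} modulo the harmless passage to the subspace $\overline{Tx}$.
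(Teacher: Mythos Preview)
Your proof is correct and, for the converse direction, essentially identical to the paper's: non-syndeticity of $A=N_T(x,\varepsilon[x])$ means its complement is thick ($Ft\subseteq T\setminus A$ for suitable $t$), and right-translating any F{\o}lner net $\{F_n\}$ into $T\setminus A$ yields a new F{\o}lner net $\{F_nt_n\}$---by the bi-invariance $|t(Fs)\vartriangle(Fs)|=|(tF\vartriangle F)s|=|tF\vartriangle F|$, which the paper records as $|A\vartriangle B|=|At\vartriangle Bt|$---along which the density of $A$ is identically zero. Your initial worry that right translation might not preserve the left-F{\o}lner property was therefore unfounded, as you yourself correctly work out; the more elaborate $(K,\eta)$-indexed construction you sketch is unnecessary once this observation is in hand.

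For the forward direction the paper is more direct than you: rather than passing to the subspace $\overline{Tx}$ and invoking Theorems~\ref{thm2.6} and~\ref{thm4.4}, it simply notes that almost periodicity means $N_T(x,\varepsilon[x])$ is syndetic in $T$ and applies Lemma~\ref{lem4.3} directly to obtain $\mathrm{BD}_*(T,x,\varepsilon)\ge\frac{1}{2|K|}$. Your route works (and the uniformity-of-entourage remark about the subspace is handled correctly), but it is a detour, since the proof of Theorem~\ref{thm4.4} itself goes through Lemma~\ref{lem4.3}; the extra machinery of minimality and w.l.a.p.\ buys nothing here because only the single point $x$ is at issue.
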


\begin{proof}
First, if $x$ is an a.p. point of $(T,X)$, then for all $\varepsilon\in\mathscr{U}_X$, $N_T(x,\varepsilon[x])$ is syndetic in $T$ so that there is some $K\in\mathscr{K}_T$ such that $T=KN_T(x,\varepsilon[x])$. Further by Lemma~\ref{lem4.3}, $\mathrm{BD}_*(T,x,\varepsilon)\ge\frac{1}{2|K|}$.

Conversely, assume that $\mathrm{BD}_*(T,x,\varepsilon)>0$ for all $\varepsilon\in\mathscr{U}_X$. To show that $x$ is a.p. for $(T,X)$, we first note that for any sets $A,B$ in $T$, $|A\vartriangle B|=|At\vartriangle Bt|$ for all $t\in T$.
Indeed, for any $t\in T$,
$$g\in A\vartriangle B\Leftrightarrow g\in A\cup B, g\not\in A\cap B\Leftrightarrow gt\in At\cup Bt, gt\not\in(A\cap B)t=At\cap Bt\Leftrightarrow gt\in At\vartriangle Bt$$
so that $|A\vartriangle B|=|At\vartriangle Bt|$ for all $t\in T$.

Let $\varepsilon\in\mathscr{U}_X$ be any given. We need to show $N_T(x,\varepsilon[x])$ is syndetic in $T$. Indeed, otherwise, $B:=T\setminus N_T(x,\varepsilon[x])$ is ``thick'' in $T$ in the sense that for all $F\in\mathscr{K}_T$ there is some $t\in T$ with $Ft\subseteq B$.

Now let $\{F_n\,|\,n\in D\}$ be any F{\o}lner net in $T$. Then for all $n\in D$ there is $t_n\in T$ such that $F_nt_n\subseteq B$. Since
$$
\frac{|t(F_nt_n)\vartriangle(F_nt_n)|}{|F_nt_n|}=\frac{|tF_n\vartriangle F_n|}{|F_n|}\to 0\quad \forall t\in T,
$$
$\{F_nt_n\,|\,n\in D\}$ is also a F{\o}lner net in $T$. However,
$$
\frac{|N_T(x,\varepsilon[x])\cap F_nt_n|}{|F_nt_n|}=\frac{|\emptyset|}{|F_nt_n|}=0
$$
so that $\textmd{BD}_*(T,x,\varepsilon)=0$, a contradiction. Thus $N_T(x,\varepsilon[x])$ must be syndetic in $T$ and this shows that $x$ is a.p. for $(T,X)$.
\end{proof}

It should be interested to notice that the sufficiency assertion of Theorem~\ref{thm4.5} still holds for all non-discrete amenable group $T$ by the same discussion.
\subsection*{\textbf{Acknowledgments}}
This project was supported by National Natural Science Foundation of China (Grant Nos. 11431012 and 11271183) and PAPD of Jiangsu Higher Education Institutions.



\end{document}